\documentclass[10pt]{article}
\usepackage[latin1]{inputenc}
\usepackage{amsmath, amssymb, amsthm}
\usepackage[dvips]{graphicx}
\usepackage{mathrsfs}
\numberwithin{equation}{section}

\newcommand{\var}{\operatorname{Var}}

\newcommand{\bvar}{\operatorname{\mathbb{V}ar}}

\newcommand{\id}{\mbox{\rm1\hspace{-.2ex}\rule{.1ex}{1.44ex}}
   \hspace{-.82ex}\rule[-.01ex]{1.07ex}{.1ex}\hspace{.2ex}}

\newcommand{\barre}[1]{\overline{#1}}
\newtheorem{theo}{Theorem}[section]
\newtheorem{pr}{Proposition}[section]

\newtheorem{lem}{Lemma}[section]

\newtheorem{rmk}{Remark}[section]

\begin{document}

\title{Quenched Central Limit Theorems for Random Walks in  Random Scenery}
\author{Nadine Guillotin-Plantard \footnotemark[1] \ \ and Julien Poisat \footnotemark[2] }
\date{}

\footnotetext[1]{Institut Camille Jordan, CNRS UMR 5208, Universit\'e de Lyon, Universit\'e Lyon 1, 43, Boulevard du 11 novembre 1918, 69622 Villeurbanne, France. E-mail: nadine.guillotin@univ-lyon1.fr}
\footnotetext[2]{Mathematical Institute, Leiden University, P.O. Box 9512, NL-2300 RA Leiden, The Netherlands. E-mail: poisatj@math.leidenuniv.nl\\
{\it Key words:} Random walk in random scenery; Limit theorem; Local time. \\
{\it AMS Subject Classification:} 60F05, 60G52.\\
This work was supported by the french ANR project MEMEMO2 10--BLAN--0125--03.\\
}

\maketitle

\begin{abstract}
Random walks in random scenery are processes defined by  
$$Z_n:=\sum_{k=1}^n\omega_{S_k}$$ where $S:=(S_k,k\ge 0)$ is a random walk evolving in $\mathbb{Z}^d$ and $\omega:=(\omega_x, x\in{\mathbb Z}^d)$ is a sequence of i.i.d. real random variables.
Under suitable assumptions on the random walk $S$ and the random scenery $\omega$,  almost surely with respect to $\omega$, the correctly renormalized sequence $(Z_n)_{n\geq 1}$ is proved to converge in distribution to a centered Gaussian law  with explicit variance.
 \end{abstract}

\section{Introduction and results}

\paragraph{The random walk in random scenery.}Let $\omega= (\omega_x)_{x\in \mathbb{Z}^d}$ be a sequence of IID real random variables (the scenery) defined on a probability space $(\Omega,\mathcal{F},\mathbb{P})$.
Let $(X_n)_{n\geq1}$ be a sequence of IID random variables defined on another probability space $(\Omega',\mathcal{A},P)$, taking values in $\mathbb{Z}^d$, $d\geq 1$. Define the random walk $S=(S_n)_{n\geq 0}$ by $S_0 = 0$ and
\begin{equation*}
 \forall n\geq 1, \quad S_n = \sum_{k=1}^n X_k.
\end{equation*}
When the support of $X_1$ is a subset of $\mathbb{N}^*$, $(S_n)_{n\geq 0}$ is called a renewal process.

\textit{Each time the random walk is said to evolve in $\mathbb{Z}^d$, it implies that the walk is truly 
$d$-dimensional, i.e. the linear space generated by the elements in the support of $X_1$ is $d$-dimensional.}

The random walk in random scenery (RWRS) is the process defined by
\begin{equation*}
\forall n\geq 1, \quad Z_n = \sum_{k=1}^n \omega_{S_k}.
\end{equation*}
In other words, $Z_n$ is the sum of the random variables $(\omega_x)_{x\in\mathbb{Z}^d}$ collected by the random walk $S$ up to time $n$.

RWRS was first introduced in dimension one by Kesten and Spitzer \cite{MR550121} and Borodin \cite{MR543530,MR535455} in order to construct new self-similar stochastic processes. Functional limit theorems for RWRS have been first obtained under the product measure $\mathbb{P}\otimes P$, usually called the {\it annealed} case. For $d=1$, Kesten and Spitzer \cite{MR550121} proved that when $X$ and
$\omega$ belong to the domains of attraction of different stable laws
of indices $1<\alpha\leq 2$ and $0<\beta\leq 2$, respectively,
then there exists $\delta>\frac{1}{2}$ such that
$\big(n^{-\delta}Z_{[nt]}\big)_{t\geq 0}$ converges weakly as $n\rightarrow
\infty$ to a continouous $\delta$-self-similar process with stationary increments,
$\delta$ being related to $\alpha$ and $\beta$
 by $\delta=1-\alpha^{-1}+(\alpha\beta)^{-1}$. The limiting process can be seen as a mixture of  $\beta$-stable processes, but it is not a stable process.
When $0<\alpha<1$ and for arbitrary $\beta$, the sequence $\big(n^{-\frac{1}{\beta}}Z_{[nt]}\big)_{t\geq 0}$ converges
weakly, as $n\rightarrow \infty$, to a stable process with index $\beta$ (see \cite{CGPP}). Bolthausen \cite{MR972774} (see also \cite{MR2883216}) gave a method to solve the case $\alpha=1$ and $\beta=2$ and especially, he proved
that when $(S_{n})_{n\in \mathbb{N}}$ is a recurrent $\mathbb{Z}^{2}$-random
walk, the sequence $\big((n\log n)^{-\frac{1}{2}}Z_{[nt]}\big)_{t\geq 0}$ satisfies a functional
central limit theorem. More recently, the case $d=\alpha\in\{1,2\}$ and $\beta\in (0,2)$ was solved in \cite{CGPP},  the authors prove that the sequence 
$\big(n^{-1/\beta} (\log n)^{1/\beta -1}   Z_{[nt]} \big)_{t\geq 0}$ converges weakly  to a stable process with index $\beta$. Finally for any arbitrary transient $\mathbb{Z}^{d}$-random walk, it can be shown that the sequence $(n^{-\frac{1}{2}}Z_{n})_n$ is asymptotically normal (see for instance \cite{MR0388547} page 53). 

Far from being exhaustive, we can cite strong approximation results and laws of the iterated logarithm \cite{MR688990,MR1700010,MR1624017}, limit theorems for correlated sceneries or walks \cite{MR2571841,MR2744890}, large and moderate deviations results \cite{MR2288063,MR1840830, MR2060457, MR2288269}, ergodic and mixing properties (see the survey \cite{MR2306188}). 

Our contribution in this paper is a {\it quenched scenery} version of the distributional limit theorems, i.e. we prove that for $\mathbb{P}$-almost every fixed path of the random scenery, a limit theorem holds for $Z_n$ (correctly renormalized).
It is worth remarking that when the random walk $S$ is fixed, functional limit theorems for the sequence  $(Z_{[nt]})_{t\geq 0}$ have been proved (see \cite{MR2883216,MR2030745,MR2779486}). Indeed, conditionally to the random walk, the sum  $Z_n$ can be viewed as a sum of IID random variables weighted by the local time of the random walk. Roughly speaking functional central limit theorems hold true as soon as the self-intersection local time of the random walk converges almost surely to some constant. 
To be complete let us mention that in the case when the scenery is given as a sequence of positive and heavy-tailed IID random variables coarse graining techniques have been used in \cite{MR2353391} to derive a distributional limit theorem for $\mathbb{P}$-almost every realization of the scenery when the random walk evolves in $\mathbb{Z}^d$ with $d\geq 2$. However, the  coarse graining scheme is adapted to heavy-tailed
environment, which is quite different from our setup (in which scenery
random variables have at least finite second moment), since it relies on
the existence of traps, that is, roughly speaking, regions of the
environment with large values. Therefore, it does not seem to be
applicable to our context, although another coarse graining technique
could presumably be designed to tackle the case of $d=2$ and scenery
random variables with finite second moment. 

\paragraph{Statement of the results.}  We denote by $\mbox{\bf (A)}$ the following assumptions on the random scenery $(w_{x})_{x\in\mathbb{Z}^d}$:
\begin{align}
&\omega_0 \stackrel{\rm{(law)}}{=} -\omega_0,\label{asspt1}\\ 
&\mathbb{E}(|\omega_0|^k) < +\infty  \quad(\forall k\geq 1), \label{asspt2}\\
&\mathbb{E}(\omega_0^2) = 1.\label{asspt4}
\end{align}
Quenched central limit theorems are proved for the RWRS in the three following cases:
 \begin{theo}[Renewal process case]\label{quenched_clt}
Suppose that $d=1$, that the support of $X_1$ is an aperiodic set of $\mathbb{N}^*$, $m := E[X_1]<+\infty$ and $E[X_1^2]<+\infty$. Then, under assumption $\mbox{\bf (A)}$, we have $\omega$-a.s,
 \begin{equation*}
 ([Z_n - EZ_n]/\sqrt{n})_{n\geq1} \stackrel{\rm{(law)}}{\longrightarrow} \mathcal{N}(0,1-1/m).
 \end{equation*}
\end{theo}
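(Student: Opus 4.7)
My plan is to exploit the simplifications afforded by the renewal (strictly increasing) setting and then carry out a second-moment-in-$\omega$ argument on the quenched characteristic function. Since the support of $X_1$ lies in $\mathbb{N}^*$, the positions $S_1<\cdots<S_n$ are distinct, so the local time is $\ind{x\in R_n}$ with $R_n=\{S_1,\ldots,S_n\}$ of size exactly $n$. Writing $p_n(x):=P(x\in R_n)$, we have
\begin{equation*}
Z_n=\sum_{x\ge 1}\omega_x\,\ind{x\in R_n},\quad EZ_n=\sum_{x\ge 1}\omega_x\,p_n(x),\quad M_n:=Z_n-EZ_n=\sum_{x\ge 1}\omega_x\bigl(\ind{x\in R_n}-p_n(x)\bigr).
\end{equation*}
The goal is to show that for $\mathbb{P}$-a.e.\ $\omega$, the quenched characteristic function $\phi_n(t,\omega):=E_P[\exp(itM_n/\sqrt n)]$ converges to $\exp(-t^2(1-1/m)/2)$ for every $t$. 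L\'evy's continuity theorem then delivers the claim; the passage from pointwise to uniform-in-$t$ convergence uses the equicontinuity estimate $|\phi_n(s)-\phi_n(t)|\le|s-t|\sqrt{E_P[M_n^2]/n}$ together with the $\mathbb{P}$-a.s.\ bound $E_P[M_n^2]/n=O(1)$ from the variance computation below.

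Pointwise convergence is attacked via the splitting $\phi_n=E_\mathbb{P}[\phi_n]+(\phi_n-E_\mathbb{P}[\phi_n])$. For the first piece (the annealed characteristic function of $M_n/\sqrt n$), I would prove an annealed CLT for $M_n/\sqrt n$ by conditioning on $S$: since the $S_k$ are distinct, $Z_n\mid S=\sum_{k\le n}\omega_{S_k}$ is a sum of $n$ i.i.d.\ copies of $\omega_0$ and the classical CLT yields $Z_n\mid S\approx\mathcal{N}(0,n)$; the quenched mean $EZ_n=\sum_x\omega_xp_n(x)$ is a Lindeberg-type weighted sum of i.i.d.\ scenery variables with $\ell^2$-norm $\sum_x p_n(x)^2\sim n/m$ (by Blackwell's renewal theorem), hence asymptotically $\mathcal{N}(0,n/m)$. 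Cram\'er--Wold applied to the conditionally i.i.d.\ weighted sum $aZ_n+b\,EZ_n=\sum_x(a\ind{x\in R_n}+bp_n(x))\omega_x$ produces joint annealed Gaussianity with covariance $n/m$, whence $M_n/\sqrt n$ is asymptotically $\mathcal{N}(0,1-2/m+1/m)=\mathcal{N}(0,1-1/m)$ and $E_\mathbb{P}[\phi_n(t)]\to e^{-t^2(1-1/m)/2}$.

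For the fluctuation $\phi_n-E_\mathbb{P}[\phi_n]$, I would show that $\var_\mathbb{P}(\phi_n(t))\to 0$ fast enough for Borel--Cantelli. The cleanest reduction is the two-walks trick: letting $S^{(1)},S^{(2)}$ be independent copies on the common scenery and $M_n^{(i)}$ the corresponding fluctuations,
\begin{equation*}
E_\mathbb{P}[\phi_n^2]-E_\mathbb{P}[\phi_n]^2=E_{\mathbb{P}\otimes P^{\otimes 2}}\bigl[e^{it(M_n^{(1)}+M_n^{(2)})/\sqrt n}\bigr]-E_{\mathbb{P}^{\otimes 2}\otimes P^{\otimes 2}}\bigl[e^{it(M_n^{(1)}+M_n^{(2)})/\sqrt n}\bigr].
\end{equation*}
The decisive observation is that $E_P[M_n\mid\omega]=0$ identically, so the annealed covariance of $M_n^{(1)}$ and $M_n^{(2)}$ vanishes in both setups (shared vs.\ independent sceneries). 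Applying the same Cram\'er--Wold argument to $M_n^{(1)}+M_n^{(2)}$ then shows that both joint laws converge to $\mathcal{N}(0,2(1-1/m))$, so the difference above tends to zero.

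The main technical obstacle is quantitative: upgrading $L^2(\mathbb{P})$-convergence to $\mathbb{P}$-almost sure convergence demands a summable decay rate for $\var_\mathbb{P}(\phi_n(t))$, forcing an effective (Berry--Esseen-type) conditional CLT together with a quantitative Blackwell renewal theorem controlling $p(z)-1/m$ and the two-point probabilities $P(x,y\in R_n)-p_n(x)p_n(y)$ --- this is where the hypothesis $E[X_1^2]<\infty$ enters. With such a polynomial rate, a Borel--Cantelli argument along a sparse subsequence, with gaps filled by the uniform bound $E_P[M_n^2]=O(n)$, will yield the $\mathbb{P}$-almost sure pointwise convergence of $\phi_n(t,\omega)$ for each $t$, and the equicontinuity argument outlined above will promote this to the claimed quenched weak convergence.
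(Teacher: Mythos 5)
Your route --- quenched characteristic functions, an annealed CLT for the mean, and a two-replica second-moment bound on $\phi_n(t,\cdot)$ --- is genuinely different from the paper's, which runs the method of moments on \emph{every} moment $m_n(k)=E[(\barre{Z_n}/\sqrt n)^k]$, shows $\mathbb{E}[m_n(k)]$ converges to the Gaussian moments and $\bvar[m_n(k)]=\mathcal{O}(n^{-1/2})$, and then does Borel--Cantelli along $t_m=[m^{\nu}]$, $\nu>2$, with an interpolation lemma for the blocks. Your skeleton is sound and would, if completed, need only finite third moments of the scenery rather than all moments, which is a real advantage. The annealed step is correct (the variance bookkeeping $n-2\,E^{\otimes2}Q_n+E^{\otimes2}Q_n\sim n(1-1/m)$ matches the paper's Proposition on renewal intersections), and the identity $\bvar(\phi_n(t))=E_{\mathbb{P}\otimes P^{\otimes2}}[\cdots]-E_{\mathbb{P}^{\otimes2}\otimes P^{\otimes2}}[\cdots]$ is the right reduction.

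However, as written the proof has a genuine gap exactly where the theorem lives: the almost-sure statement. Showing that both replica laws converge to $\mathcal{N}(0,2(1-1/m))$ only gives $\bvar(\phi_n(t))\to0$, i.e.\ convergence in $\mathbb{P}$-probability; the polynomial rate you need is announced (``forcing an effective Berry--Esseen-type conditional CLT together with a quantitative Blackwell renewal theorem'') but never established, and everything downstream is conditional on it. Concretely, conditioning on the two walks reduces the shared-versus-independent comparison to controlling the cross term $\frac1n\sum_x\overline{N^{(1)}_n(x)}\,\overline{N^{(2)}_n(x)}=\frac{Q_n}{n}-\frac{E^{(1)}Q_n}{n}-\frac{E^{(2)}Q_n}{n}+\frac{E^{\otimes2}Q_n}{n}$, and the available estimate (the paper's $E^{\otimes2}[(Q_n/n-1/m)^2]\le C/n$, which is where $E[X_1^2]<\infty$ enters) yields at best $\bvar(\phi_n(t))=\mathcal{O}(n^{-1/2})$. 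That is not summable, so a sparse subsequence plus block interpolation is unavoidable --- and your proposed gap-filler, ``the uniform bound $E_P[M_n^2]=\mathcal{O}(n)$,'' is itself an unproven $\omega$-a.s.\ claim (the quantity $E_P[M_n^2]=\sum_{x,y}\omega_x\omega_y\cov_P(\ind{x\in R_n},\ind{y\in R_n})$ depends on $\omega$ and requires its own concentration argument), and in any case a bound of order $n$ on $E_P[M_n^2]$ does not control the block increment: you need something like $E_P[(M_n-M_{t_m})^2]\le C(n-t_m)$ uniformly over $t_m<n\le t_{m+1}$, $\omega$-a.s., which is precisely the $D_m$-estimate the paper proves with a separate moment computation. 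Until the rate for $\bvar(\phi_n(t))$ and the block control are actually proved, the argument establishes the annealed CLT and convergence in probability of the quenched characteristic function, but not the quenched theorem.
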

In the case of planar random walks with finite non-singular covariance matrix, we are able to prove a convergence in law to a Gaussian random variable along some subsequences:
\begin{theo}\label{quenched_clt2}
 Suppose that $d=2$, that the random walk is symmetric with finite non-singular covariance matrix $\Sigma$. Then, under assumption $\mbox{\bf (A)}$, for all $\nu>0$, $\omega$-a.s 
 \begin{equation*}
  \frac{Z_{t_m}}{\sqrt{t_m \log t_m}} \stackrel{\rm{(law)}}{\longrightarrow} \mathcal{N}(0,\sigma^2),
 \end{equation*}
where $t_m = [\exp(m^{1+\nu})]$ and $\sigma^2 = (2\pi\sqrt{\det \Sigma})^{-1}$.
\end{theo}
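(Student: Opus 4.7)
The plan is to prove convergence of the conditional characteristic function
\begin{equation*}
\Phi_n(t;\omega) := E\big[\exp(i t Z_n/b_n)\big], \qquad b_n := \sqrt{n\log n},
\end{equation*}
pointwise in $t$, $\mathbb{P}$-almost surely, along the subsequence $(t_m)$. The decomposition $\Phi_n = \mathbb{E}\Phi_n + (\Phi_n - \mathbb{E}\Phi_n)$ splits the task into two pieces: first, an annealed convergence $\mathbb{E}\Phi_n(t;\omega) \to e^{-\sigma^2 t^2/2}$ (essentially Bolthausen's CLT for RWRS in $d=2$); and second, an $L^2(\mathbb{P})$-concentration estimate $\bvar(\Phi_n(t;\omega))\to 0$, quantitative enough to apply Borel--Cantelli along the sparse subsequence $t_m = [\exp(m^{1+\nu})]$. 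Together with a countable intersection over rational $t$'s and L\'evy's continuity theorem (whose continuous Gaussian limit automatically upgrades convergence on a dense set into weak convergence, via tightness at $0$), this yields the claimed quenched CLT.

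The key computation is the variance estimate. Writing $Z_n = \sum_{x\in\mathbb{Z}^2} N_n(x)\omega_x$ with $N_n(x)$ the random-walk local time and using independence of the scenery,
\begin{equation*}
\mathbb{E}\Phi_n(t;\omega) = E\Big[\prod_x \phi(tN_n(x)/b_n)\Big],
\end{equation*}
where $\phi$ is the (real, by assumption (A)) characteristic function of $\omega_0$. Introducing an independent copy $S'$ of the walk with local times $N'_n(x)$ and mutual intersection local time $I_n := \sum_x N_n(x) N'_n(x)$, a similar factorisation gives
\begin{equation*}
\bvar(\Phi_n(t;\omega)) = E\otimes E'\Big[\prod_x \phi\big(t(N_n(x)-N'_n(x))/b_n\big)- \prod_x \phi(tN_n(x)/b_n)\phi(tN'_n(x)/b_n)\Big].
\end{equation*}
Using $\log\phi(u) = -u^2/2 + O(u^4)$, both products are, to leading order, Gaussian-type exponentials in $V_n, V'_n, I_n$ (with $V_n = \sum_x N_n(x)^2$), and their ratio reduces to $\exp(t^2 I_n/b_n^2)$; the relevant integrand is therefore of order $\exp(-t^2(V_n+V'_n)/(2b_n^2))\cdot(\exp(t^2 I_n/b_n^2)-1) \sim t^2 I_n/b_n^2$. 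The bound $\mathbb{E}I_n = \sum_{k,l=1}^n P(S_k = S'_l) = O(n)$, immediate from the two-dimensional local CLT estimate $P(S_k - S'_l = 0) = O(1/(k+l))$, then yields $\bvar(\Phi_n(t;\omega)) = O(1/\log n)$.

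Along $t_m = [\exp(m^{1+\nu})]$ we have $\log t_m \sim m^{1+\nu}$, so $\bvar(\Phi_{t_m}(t;\omega)) = O(m^{-(1+\nu)})$ is summable. Chebyshev's inequality and the Borel--Cantelli lemma then give $\Phi_{t_m}(t;\omega) - \mathbb{E}\Phi_{t_m}(t;\omega)\to 0$ $\mathbb{P}$-a.s.\ for each fixed $t$; intersecting over a countable dense set of $t$'s and combining with the annealed convergence concludes the proof. The \textbf{main obstacle} is making the Taylor expansion of $\log\phi$ rigorous inside the infinite product: the error at a site $x$ is of order $(N_n(x)/b_n)^4$, and its sum must be controlled via $\sum_x N_n(x)^4 \le (\max_x N_n(x))^2\, V_n$. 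One thus needs $(\max_x N_n(x))^2 V_n/b_n^4 \to 0$, which follows from Erd\H{o}s--Taylor type bounds giving $\max_x N_n(x) = O((\log n)^2)$ together with $V_n = O(n\log n)$ (Bolthausen); the analogous expansion for the two-walk integrand requires the same type of control applied to $(N_n,N'_n)$. The choice $t_m = [\exp(m^{1+\nu})]$ is essential here: along the full sequence the $O(1/\log n)$ variance decay is not summable, so the subsequence cannot be avoided by this method.
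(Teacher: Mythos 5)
Your argument is sound in outline but follows a genuinely different route from the paper. The paper proves Theorem \ref{quenched_clt2} by the method of moments: it shows $\mathbb{E}[m_n(2k)]\to\sigma^{2k}(2k-1)!!$ and, crucially, that $\bvar[m_n(k)]$ decays like a power of $1/\log n$ (Lemmas \ref{lemma2RW} and \ref{lemma4RW}), then applies Borel--Cantelli along $t_m=[\exp(m^{1+\nu})]$ and the moment theorem. You instead work with the quenched characteristic function, splitting it into its annealed mean (Bolthausen's CLT) and a fluctuation controlled in $L^2(\mathbb{P})$. The engine is the same in both cases --- the variance of the quenched quantity is governed by the mutual intersection local time of two replicas, $E^{\otimes 2}[Q_n]=\mathcal{O}(n)$ against the normalisation $n\log n$, yielding only $\mathcal{O}(1/\log n)$ decay, which forces the same sparse subsequence --- but your route trades the paper's combinatorics of pairings for a Taylor expansion of $\log\phi$, and in principle only needs a finite fourth moment of the scenery rather than all moments. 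That is a real gain, and the reduction of the two-replica integrand to $\exp(-t^2(V_n+V_n')/(2b_n^2))(\exp(t^2I_n/b_n^2)-1)\le t^2I_n/b_n^2$ (via Cauchy--Schwarz, $I_n\le(V_n+V_n')/2$) is correct.

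Two points need more care than you give them. First, your control of the Taylor remainder invokes the almost sure Erd\H{o}s--Taylor bound $\max_xN_n(x)=\mathcal{O}((\log n)^2)$, but the remainder sits inside an $E\otimes E'$ expectation, so an a.s.\ statement is not directly usable; you need a tail bound such as $P(\max_xN_n(x)>(\log n)^3)\le n\,e^{-c(\log n)^2}$ (available since $N_n(0)$ has geometric tails with escape probability of order $1/\log n$), together with the trivial bound $|\Pi_1-\Pi_2|\le2$ on the exceptional event. Second, pointwise convergence of characteristic functions on a countable dense set of $t$ does not by itself trigger L\'evy's continuity theorem: you must supply tightness of the laws of $Z_{t_m}/\sqrt{t_m\log t_m}$ for $\mathbb{P}$-a.e.\ $\omega$, e.g.\ by showing $E[(Z_{t_m}/\sqrt{t_m\log t_m})^2]\to\sigma^2$ a.s.\ (which is precisely the $k=1$ case of the paper's moment estimates and requires its own Borel--Cantelli step). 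Both repairs are standard, so I would count your proposal as a correct alternative proof once these are written out.
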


\begin{theo}\label{quenched_clt3}
Suppose that the random scenery is centered, square integrable with variance one and that one of the following assumptions 
holds:
\begin{description}
 \item[{\bf Assumption  (1) }:] $d\geq3$ and $X_1$ is square integrable. 
 \item[Assumption (2) :] $X_1$ is in the domain of attraction of a strictly stable law with index $\alpha\in(0,2)$ and $d>\alpha$.
\end{description}
Then, we have $\omega$-a.s,
 \begin{equation*}
 (Z_{[nt]}/\sqrt{n})_{t\geq 0} \stackrel{\rm{(law)}}{\longrightarrow}  (\sigma B_t)_{t\geq 0},
 \end{equation*}
 where $(B_t)_{t\geq 0}$ is a real Brownian motion and
 \begin{equation*}
 \sigma^2 = \gamma^2 \sum_{k\geq 0} k^2 (1-\gamma)^{k-1}
 \end{equation*}
 with
 \begin{equation*}
  \gamma = P(S_k \neq 0\, \mbox{\rm for any\ } k\geq 1)\in (0,1).
 \end{equation*}
\end{theo}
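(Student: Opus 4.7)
The plan is to analyse the \emph{quenched characteristic function}
$$\phi_n(\omega,\lambda):=E_P\!\left[\exp\!\left(i\lambda Z_n/\sqrt n\right)\right]$$
and show that $\phi_n(\omega,\lambda)\to e^{-\sigma^2\lambda^2/2}$ for every $\lambda\in\mathbb{R}$ and $\mathbb P$-a.e.\ $\omega$. Writing $Z_n=\sum_{x\in\mathbb Z^d}N_n(x)\omega_x$ with $N_n(x):=\#\{1\le k\le n:S_k=x\}$, the self-intersection local time $V_n:=\sum_x N_n(x)^2$ satisfies
$$\mathbb{E}[V_n]/n\ =\ 1+\tfrac{2}{n}\sum_{l=1}^{n-1}(n-l)P(S_l=0)\ \longrightarrow\ 2G(0)-1\ =\ \tfrac{2-\gamma}{\gamma}\ =\ \gamma^2\sum_{k\ge 1}k^2(1-\gamma)^{k-1},$$
using the classical identity $G(0)=1/\gamma$ with $G(0):=\sum_{k\ge 0}P(S_k=0)$. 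Under either assumption the walk is transient so $\sum_{l\ge 1}P(S_l=0)<\infty$, and the usual decomposition of the trajectory into i.i.d.\ sojourns at freshly visited sites upgrades the above to $V_n/n\to\sigma^2$ $P$-a.s. Conditioning on $S$ and applying the Lindeberg CLT to the independent summands $N_n(x)\omega_x$ (using $\max_x N_n(x)=o(\sqrt n)$, which again follows from transience) then yields the annealed version $\mathbb E_\mathbb P[\phi_n(\lambda)]\to e^{-\sigma^2\lambda^2/2}$.

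To go from annealed to quenched, it suffices to prove $\bvar_\mathbb P(\phi_n(\lambda))\to 0$. Introducing an independent copy $(S',P')$ of $(S,P)$ with local times $N'_n$, and exploiting the symmetry \eqref{asspt1},
$$\mathbb E_\mathbb P[|\phi_n(\lambda)|^2]=E_{P\otimes P'}\!\Bigl[\,\prod_{x\in\mathbb Z^d}\varphi_\omega\!\bigl(\lambda(N_n(x)-N'_n(x))/\sqrt n\bigr)\Bigr],$$
where $\varphi_\omega(u):=\mathbb E[e^{iu\omega_0}]=1-u^2/2+O(u^4)$ by \eqref{asspt2}. Taking logarithms and dropping terms of order $1/n$, the product equals $\exp(-\lambda^2 W_n/(2n))(1+o(1))$ with $W_n:=\sum_x(N_n(x)-N'_n(x))^2=V_n(S)+V_n(S')-2I_n$ and $I_n:=\sum_x N_n(x)N'_n(x)$. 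The decisive input is the intersection estimate $\mathbb E[I_n]=\sum_{j,k=1}^nP(S_j-S'_k=0)=o(n)$: by the local CLT (or its stable-law analogue), this is $O(\sqrt n)$, $O(\log n)$, $O(1)$ for $d=3,4,\ge 5$ under Assumption~(1), and $O(n^{2-d/\alpha})$ (up to a logarithm at $d=2\alpha$) under Assumption~(2) with $d>\alpha$. Hence $W_n/n\to 2\sigma^2$ in $L^1(P\otimes P')$, giving $\mathbb E_\mathbb P[|\phi_n|^2]\to e^{-\sigma^2\lambda^2}$ and $\bvar_\mathbb P(\phi_n(\lambda))\to 0$.

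To promote this $L^2(\mathbb P)$ convergence to an almost-sure one, the same two-walk computation pushed one order further -- using the finite fourth moment of $\omega_0$ and the intersection estimates above -- delivers $\bvar_\mathbb P(\phi_n(\lambda))=O(n^{-\eta})$ for some $\eta>0$. Borel--Cantelli along a geometric subsequence $n_k$, combined with a standard interpolation estimate for $|\phi_n-\phi_{n_k}|$ when $n_k\le n<n_{k+1}$ (obtained from annealed moments of $Z_n-Z_{n_k}$), yields $\phi_n(\omega,\lambda)\to e^{-\sigma^2\lambda^2/2}$ $\mathbb P$-a.s. Running the identical argument with $\lambda Z_n/\sqrt n$ replaced by $\sum_j\mu_j Z_{[nt_j]}/\sqrt n$ gives quenched convergence of all finite-dimensional distributions to those of $(\sigma B_t)_{t\ge 0}$. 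Tightness of the quenched laws on $D([0,T])$ follows from an a.s.\ control of the quenched fourth moment $E_P[(Z_{[nt]}-Z_{[ns]})^4]\le C(\omega)((t-s)n)^2$, whose expansion into products of local times is again handled by the two-walk intersection estimates.

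The main obstacle is the quantitative step: the mere cancellation $\bvar_\mathbb P(\phi_n)\to 0$ is insufficient for Borel--Cantelli, so one has to extract a polynomial rate from the two-walk calculation uniformly in $\omega$ by controlling higher moments of $I_n$ and $V_n$. Under Assumption~(1) with $d=3$ the intersection local time grows as $\sqrt n$, which fixes the achievable exponent $\eta$; the quenched tightness further requires the analogous bounds at the fourth-moment level, consuming moments of order up to eight of the scenery -- all of which are guaranteed by \eqref{asspt2}. In the remaining regimes (larger $d$, or smaller $\alpha$ under Assumption~(2)) the stronger transience leaves ample room.
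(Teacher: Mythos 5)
Your route is genuinely different from the paper's. The paper observes that $\xi_k=(\omega_{S_k+x})_{x\in\mathbb{Z}^d}$ is a stationary ergodic Markov chain (the environment seen from the particle) and applies the quenched invariance principle of Derriennic and Lin \cite{DL} to the coordinate function $f(\omega)=\omega_0$; the only quantitative input is
$\bigl\|\sum_{k=1}^nP^kf\bigr\|_{2,\mu}^2=E^{\otimes 2}[Q_n]=\mathcal{O}(n^{\alpha})$ with $\alpha<1$ (Propositions \ref{estimate_RW2} and \ref{estimate_RW3}), which is exactly the intersection estimate you single out as decisive. So the two arguments hinge on the same bound, but the paper obtains the full functional limit theorem in a few lines and under the stated hypothesis that the scenery is merely centered and square integrable, whereas you rebuild finite-dimensional convergence and tightness by hand via quenched characteristic functions. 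The scheme you describe (annealed CLT, replica computation of $\bvar_{\mathbb{P}}(\phi_n)$ through $Q_n$, Borel--Cantelli along a subsequence, interpolation, quenched tightness) is the same philosophy the paper uses for Theorems \ref{quenched_clt} and \ref{quenched_clt2}, and your individual estimates (the identification $2G(0)-1=(2-\gamma)/\gamma=\sigma^2$ and the orders of $E^{\otimes 2}[Q_n]$ in each regime) are correct.

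The gap is that your proof does not establish Theorem \ref{quenched_clt3} under its stated hypotheses. You invoke \eqref{asspt1} and \eqref{asspt2} and explicitly consume fourth and eighth moments of $\omega_0$: in the expansion $\varphi_\omega(u)=1-u^2/2+O(u^4)$, in the polynomial rate for $\bvar_{\mathbb{P}}(\phi_n)$, and above all in the quenched tightness bound $E_P[(Z_{[nt]}-Z_{[ns]})^4]\le C(\omega)((t-s)n)^2$, which is not even finite unless $\mathbb{E}[\omega_0^4]<\infty$. Theorem \ref{quenched_clt3} assumes only a centered, square-integrable scenery, with neither symmetry nor higher moments; this is precisely what the appeal to \cite{DL} buys, since its hypothesis is only $f\in L^2(\mu)$. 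Two smaller points: the Borel--Cantelli plus interpolation step needs a subsequence with $n_{k+1}/n_k\to1$ (a genuinely geometric subsequence of ratio $\rho>1$ leaves an interpolation error of order $\sqrt{\rho-1}$ that does not vanish, so a polynomial subsequence matched to the rate $n^{-\eta}$ is what is actually required), and the claimed polynomial decay of $\bvar_{\mathbb{P}}(\phi_n)$, while plausible from $|\prod_x u_x-\prod_x v_x|\le\sum_x|u_x-v_x|$ together with $E^{\otimes 2}[Q_n]=\mathcal{O}(n^{\beta})$, $\beta<1$, is asserted rather than proved.
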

\noindent\mbox{\bf Remarks:}\\*
{\bf 1-} Assumptions in Theorems \ref{quenched_clt} and \ref{quenched_clt3} imply that the random walk is transient.
It easily follows from Local limit theorems for the random walk (see \cite[Theorem 1]{MR0222939}).\\*
{\bf 2-} Theorem \ref{quenched_clt3} still holds if the random variables $X_n, n\geq 1$ are only assumed to belong to the basin of attraction of a stable distribution. \\*

Our paper is organized as follows. In Section 2, results about self-intersections and mutual intersections of $\mathbb{Z}^d$- random walks with finite or infinite second moment are collected. In Section 3, Theorems \ref{quenched_clt}, \ref{quenched_clt2} and \ref{quenched_clt3} are proved. In order to prove Theorem \ref{quenched_clt3}, we see the process $(Z_n)_{n\geq0}$ as a functional of the {\it environment viewed from the particle} (i.e. from the random walker) and use a theorem of Derrienic and Lin \cite{DL} on invariance principles for Markov chains started at a point to deduce a quenched invariance principle for $(Z_n)_{n\geq 0}$. This approach can be followed as soon as the number of mutual intersections of two independent copies of the random walk up to time $n$ is asymptotically less than $\sqrt{n}$. Since this does not apply under the assumptions of Theorems \ref{quenched_clt} and \ref{quenched_clt2}, we rather use for those cases the method of moments, at the cost of assuming moments of any order for the scenery.

\section{Preliminaries}

In the following, we use the notation $$\barre{X} := X - EX$$ and $$u_n = \mathcal{O}(v_n) \Longleftrightarrow \exists C>0 : |u_n| \leq C v_n.$$ 

\paragraph{Mutual and self-intersection local times.} We will denote by $N_n(x)$ the time spent by the walk in site $x$ up to time $n$, that is
\begin{equation*}
 \forall x\in \mathbb{Z}^d,\, \forall n\geq 1,\quad N_n(x) = \sum_{k=1}^n \id_{\{S_k = x\}}. 
\end{equation*}
The RWRS can be rewritten as
\begin{equation*}
 Z_n = \sum_{x\in\mathbb{Z}^d} \omega_x N_n(x),
\end{equation*}
and we have
\begin{equation*}
 \overline{Z}_n = \sum_{x\in\mathbb{Z}^d} \omega_x \overline{N_n(x)}.
\end{equation*}
We will denote by $I_n^{[p]}$ the $p$-fold self-intersection local time of the walk up to time $n$, which writes
\begin{equation*}
 \forall p\geq 2, \forall n\geq1,\quad I_n^{[p]} = \sum_{x\in\mathbb{Z}^d} [N_n(x)]^p= \sum_{1\leq k_1,\ldots, k_p \leq n} \id_{\{S_{k_1}=S_{k_2}=\ldots=S_{k_p}\}},
\end{equation*}
and we abbreviate $I_n^{[2]}$ as $I_n$. If $N^{(1)}_n(x)$ and $N^{(2)}_n(x)$ are the respective local times of $S^{(1)}$ and $S^{(2)}$, two independent copies (or replicas) of the walk $S$ -- these notations will be kept throughout the paper -- we define $Q_n^{[p,q]}$ by
\begin{equation*}
Q_n^{[p,q]} = \sum_{x\in\mathbb{Z}^d} [N^{(1)}_n(x)]^p [N^{(2)}_n(x)]^q = \sum_{\substack{1\leq k_1,\ldots, k_p \leq n\\1\leq l_1,\ldots, l_q \leq n}} \id_{\{S^{(1)}_{k_1}=\ldots=S^{(1)}_{k_p}=S^{(2)}_{l_1}=\ldots=S^{(2)}_{l_q}\}}
\end{equation*}
for all $(p,q)\in \mathbb{N}^2$ and all $n\geq 1$. The mutual intersection local time of two replicas up to time $n$ is $Q_n^{[1,1]}$ and abbreviated as $Q_n$.

\paragraph{Moments estimates of local times.}We collect here a number of useful estimates for the moments of $I_n^{[p]}$ and $Q_n^{[p,q]}$.

 
 \begin{pr}[Mutual intersections of random walks with finite variance] \label{estimate_RW2}
 Suppose $(S_n)_{n\geq0}$ is as in Theorem \ref{quenched_clt3}, Assumption (1).
 \begin{enumerate}
\item If $d\geq5$,
\begin{equation*}
 Q_{\infty} = \sup_{n\geq1} Q_n = \sum_{k,l\geq1} \id_{\{S^{(1)}_k = S^{(2)}_l\}}
\end{equation*}
is a.s finite and
\begin{equation*}
 \forall k\geq1,\quad E^{\otimes 2}[Q_{\infty}^k] < +\infty.
\end{equation*}
\item If $d=4$, for all $k\geq1$ there exists a constant $C=C(k)$ such that
\begin{equation*}
 E^{\otimes 2}[Q_n^k] \leq C (\log n)^k.
\end{equation*}
\item If $d=3$, for all $k\geq1$ there exists a constant $C=C(k)$ such that
\begin{equation*}
 E^{\otimes 2}[Q_n^k] \leq C n^{k/2}.
\end{equation*}
\end{enumerate}
\end{pr}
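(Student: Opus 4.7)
The starting point is the double-sum representation
\begin{equation*}
Q_n = \sum_{k,l=1}^n \id_{\{S^{(1)}_k = S^{(2)}_l\}} = \sum_{x\in\mathbb{Z}^d} N^{(1)}_n(x) N^{(2)}_n(x).
\end{equation*}
Writing $p_n(x) := P(S_n = x)$ and using independence of the two replicas,
\begin{equation*}
E^{\otimes 2}[Q_n] = \sum_{k,l=1}^n \sum_x p_k(x) p_l(x).
\end{equation*}
Under Assumption (1), the classical local central limit theorem for $\mathbb{Z}^d$-random walks with finite variance gives $\sup_x p_n(x) \leq C n^{-d/2}$, and hence $\sum_x p_k(x) p_l(x) \leq C (k \vee l)^{-d/2}$. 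Summing over $k,l\leq n$ yields orders $O(1)$ for $d\geq 5$, $O(\log n)$ for $d=4$, and $O(n^{1/2})$ for $d=3$, which are exactly the claimed bounds on $E^{\otimes 2}[Q_n]$.

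For the $k$-th moments, I would expand
\begin{equation*}
E^{\otimes 2}[Q_n^k] = \sum_{k_1,\ldots,k_k,\, l_1,\ldots, l_k = 1}^n P\Bigl(\bigcap_{i=1}^k \{S^{(1)}_{k_i} = S^{(2)}_{l_i}\}\Bigr),
\end{equation*}
restrict, up to the combinatorial factor $k!$, to $k_1<k_2<\ldots<k_k$, and let $\sigma\in\mathfrak{S}_k$ encode the order of the $l_i$'s so that $l_{\sigma(1)}<\ldots<l_{\sigma(k)}$. Applying the Markov property to each replica factorizes the probability inside the sum into $k$ transition probabilities $p_{k_i-k_{i-1}}(\cdot)$ for $S^{(1)}$ and $k$ transition probabilities $p_{l_{\sigma(i)}-l_{\sigma(i-1)}}(\cdot)$ for $S^{(2)}$. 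Summing out the spatial variables $x_1,\ldots,x_k$ in an appropriate order and using at each step the uniform LLT bound $p_t(y)\leq Ct^{-d/2}$ leads to
\begin{equation*}
E^{\otimes 2}[Q_n^k] \leq C_k \bigl(E^{\otimes 2}[Q_n]\bigr)^k,
\end{equation*}
which is exactly what is claimed in each of the three regimes. This type of expansion is classical (see e.g.\ Chen's book on random walk intersections), and I would invoke it rather than redo the computation from scratch.

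For part (1), $d\geq 5$, the mean bound $E^{\otimes 2}[Q_n]\leq C$ is uniform in $n$, so monotone convergence gives $E^{\otimes 2}[Q_\infty]<\infty$; in particular $Q_\infty<\infty$ almost surely. The same moment expansion, combined with Fatou's lemma applied to the partial sums, yields $E^{\otimes 2}[Q_\infty^k]<\infty$ for every $k\geq 1$. The main obstacle is the combinatorial bookkeeping in the second paragraph: when the permutation $\sigma$ is not the identity, the time increments $k_i-k_{i-1}$ and $l_{\sigma(i)}-l_{\sigma(i-1)}$ interleave in a non-trivial way, and it takes a careful induction on the spatial variables to obtain the clean factor $(E^{\otimes 2}[Q_n])^k$ rather than one polluted by additional logarithmic or polynomial losses. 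Once the right order of summation is chosen, all $k!$ permutations contribute the same order of magnitude and are absorbed into the constant $C_k$, so the three cases follow directly from the mean estimate already proved.
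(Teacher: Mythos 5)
Your proposal is correct in substance, but it takes a different route from the paper: the paper gives no argument at all for this proposition and simply cites the literature for each item (Le~Gall--Rosen for $d\geq5$, Marcus--Rosen for $d=4$, and Chen's book, Lemma~6.2.2, for $d=3$). What you reconstruct is essentially the proof behind those citations, and it is the very same two-step strategy the authors do spell out for the companion Proposition~\ref{estimate_RW3} (infinite-variance case): first reduce the $k$-th moment to the first moment via the bound $E^{\otimes 2}[Q_n^k]\leq C_k\,[E^{\otimes 2}Q_n]^k$, then estimate $E^{\otimes 2}[Q_n]$ by the local limit theorem. Your mean computation is clean and correct ($\sum_x p_k(x)p_l(x)\leq C(k\vee l)^{-d/2}$, summed to give $O(1)$, $O(\log n)$, $O(n^{1/2})$ in the three regimes), and for $d\geq 5$ the passage to $Q_\infty$ by monotone convergence is fine. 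The one place where you are leaning on an unproved lemma is exactly the reduction $E^{\otimes 2}[Q_n^k]\leq C_k\,[E^{\otimes 2}Q_n]^k$: you correctly identify the interleaving of the time increments of the two replicas as the delicate point, and defer it to Chen's book; this is legitimate (the paper does precisely the same, citing the proof of a lemma of Becker--K\"onig, when it needs this reduction in Proposition~\ref{estimate_RW3}), though a fully self-contained write-up would also have to handle coincidences among the $k_i$'s (you restrict to strict inequalities $k_1<\cdots<k_k$, whereas ties contribute diagonal terms that need a separate, lower-order estimate). What your approach buys is a uniform, self-contained treatment of all three dimensions from a single LLT input; what the paper's approach buys is brevity and reliance on sharp constants already available in the intersection-local-time literature.
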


\begin{pr}[Mutual intersections of random walks with infinite variance] \label{estimate_RW3}
Suppose $(S_n)_{n\geq0}$ is as in Theorem \ref{quenched_clt3}, Assumption (2).
\begin{enumerate}
 \item If $\alpha < d/2$, \begin{equation*}
 Q_{\infty} = \sup_{n\geq1} Q_n = \sum_{k,l\geq1} \id_{\{S^{(1)}_k = S^{(2)}_l\}}
\end{equation*}
is a.s finite and
\begin{equation*}
 \forall k\geq1,\quad E^{\otimes 2}[Q_{\infty}^k] < +\infty.
\end{equation*}
\item If $\alpha = d/2$, for all $k\geq1$ there exists a constant $C=C(k)$ such that
\begin{equation*}
 E^{\otimes 2}[Q_n^k] \leq C (\log n)^k.
\end{equation*}
\item If $d/2 < \alpha < d$, for all $k\geq1$ there exists a constant $C=C(k)$ such that
\begin{equation*}
 E^{\otimes 2}[Q_n^k] \leq C n^{k(2-d/\alpha)} \ll n^k.
\end{equation*}
\end{enumerate} 
\end{pr}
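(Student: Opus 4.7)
The plan is to mirror the proof of Proposition~\ref{estimate_RW2} in the finite variance case, replacing the Gaussian local limit theorem by its $\alpha$-stable counterpart. Under Assumption~(2), the walk lies in the domain of attraction of a strictly stable law of index $\alpha$, with normalizing sequence $a_n = n^{1/\alpha} L(n)$ ($L$ slowly varying). Gnedenko's local limit theorem then yields
\begin{equation*}
\sup_{x \in \mathbb{Z}^d} P(S_n = x) \leq \frac{C}{a_n^d}.
\end{equation*}
Since $S^{(1)}_k - S^{(2)}_l$ is a sum of $k+l$ independent increments still in the domain of attraction of the same stable law, applying the bound at time $k+l$ gives
\begin{equation*}
P\bigl(S^{(1)}_k = S^{(2)}_l\bigr) = P\bigl(S^{(1)}_k - S^{(2)}_l = 0\bigr) \leq \frac{C}{a_{k+l}^d},
\end{equation*}
which is of order $(k+l)^{-d/\alpha}$ up to slowly varying corrections.

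First I would handle the case $k = 1$ by summing this pointwise bound over $1 \leq k, l \leq n$. A direct calculation shows that $\sum_{k,l=1}^n (k+l)^{-d/\alpha}$ is bounded uniformly in $n$ when $d/\alpha > 2$, of order $\log n$ when $d/\alpha = 2$, and of order $n^{2 - d/\alpha}$ when $1 < d/\alpha < 2$; this reproduces the three announced rates at the level of the first moment. In Case~1, the finiteness of $E^{\otimes 2}[Q_\infty]$ gives both the a.s.\ finiteness of $Q_\infty$ and its coincidence with $\sup_n Q_n$ through monotone convergence.

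For arbitrary $k \geq 1$, I would expand
\begin{equation*}
E^{\otimes 2}[Q_n^k] = \sum_{\substack{k_1, \ldots, k_k \in \{1, \ldots, n\} \\ l_1, \ldots, l_k \in \{1, \ldots, n\}}} P\bigl(S^{(1)}_{k_i} = S^{(2)}_{l_i},\ i = 1, \ldots, k\bigr),
\end{equation*}
sort the $k_i$'s into increasing order at the cost of a factor $k!$, and apply the Markov property of $S^{(1)}$ at the ordered times together with the uniform LLT bound on each successive increment. Summing over the (unordered) $l_i$'s and the intermediate positions of $S^{(2)}$ reduces, at each step, to a one-dimensional sum already controlled in the previous paragraph, and the resulting estimate takes the form of $C(k)$ times the $k$-th power of the first-moment bound. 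This yields exactly $(\log n)^k$ in Case~2, $n^{k(2 - d/\alpha)}$ in Case~3, and the finiteness of all moments of $Q_\infty$ in Case~1.

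The main obstacle lies in the combinatorial bookkeeping of that last step: any pairing between the intersection times on the two replicas is allowed, so once the $k_i$'s are sorted the $l_i$'s need not be, and one has to account for all $k!$ possible orderings when applying the Markov property to $S^{(2)}$. Once one shows that these $(k!)^2$ permutations contribute only through the overall constant $C(k)$, the slowly varying corrections from the stable LLT are easily absorbed in $C(k)$ in the off-critical regimes (Cases~1 and 3) and leave the logarithmic bound intact in the critical case $\alpha = d/2$.
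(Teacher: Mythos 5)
Your proposal is correct and follows the same overall strategy as the paper: bound the first moment $E^{\otimes 2}[Q_n]$ by summing a local-limit-theorem estimate of the collision probability over $1\leq k,l\leq n$, then reduce the $k$-th moment to the $k$-th power of the first moment. The differences are in how each step is carried out. For the reduction, the paper simply cites the inequality $E^{\otimes 2}[Q_n^k]\leq C\,[E^{\otimes 2}Q_n]^k$ from the proof of Lemma 5.2 in \cite{MR2094445}, whereas you propose to reprove it via the standard combinatorial expansion (sorting the times, applying the Markov property on each replica, summing greedily over the last common site); that argument does go through -- it is essentially how the cited lemma is proved -- so you are trading a citation for a page of bookkeeping. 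For the first moment, the paper uses the Chapman--Kolmogorov identity $\sum_x P(S_k=x)P(S_l=x)=P(S_{k+l}=0)$ (valid for symmetric walks) followed by the LLT at the origin, whereas you use the uniform bound $\sup_x P(S_n=x)\leq C a_n^{-d}$; your route is slightly more robust since it does not require symmetry, but note that $S^{(1)}_k-S^{(2)}_l$ is not a sum of i.i.d.\ increments unless the walk is symmetric, so the clean way to justify your estimate is $P(S^{(1)}_k=S^{(2)}_l)\leq \sup_x P(S_{\max(k,l)}=x)\leq C a_{\max(k,l)}^{-d}\leq C' a_{k+l}^{-d}$, obtained by conditioning on the shorter walk. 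One caveat, which applies equally to the paper's own proof: if the normalizing sequence $a_n=n^{1/\alpha}L(n)$ carries a genuinely unbounded slowly varying factor, it cannot simply be absorbed into the constant in the critical case $\alpha=d/2$ (nor in Case 3), so the clean bounds $(\log n)^k$ and $n^{k(2-d/\alpha)}$ implicitly assume $a_n\asymp n^{1/\alpha}$; your claim that the slowly varying corrections ``leave the logarithmic bound intact'' should be flagged as relying on that normalization.
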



\begin{pr}[Self-intersections and mutual intersections of two-dimensional random walks with finite covariance matrix] \label{estimate_RW4} Suppose that $(S_n)_{n\geq0}$ satisfies the assumptions of Theorem \ref{quenched_clt2}.
\begin{enumerate}
 \item For all $p,k\geq1$, there exists a constant $C = C(p,k)$ such that
 \begin{equation*}
  E[(I_n^{[p]})^k] \leq C n^k (\log n)^{k(p-1)}.
 \end{equation*}
 \item $(I_n/(n\log n))_{n\geq1}$ converges a.s to $\sigma^2$ as $n\rightarrow +\infty$.
 \item For all $k\geq1$,
 \begin{equation*}
  E[I_n^k] \stackrel{n\rightarrow +\infty}{\sim} \sigma^{2k} (n \log n)^k.
 \end{equation*}
 \item For all $k\geq1$ there exists a constant $C=C(k)$ such that
\begin{equation*}
 E^{\otimes 2}[Q_n^k] \leq C n^k.
\end{equation*}
\end{enumerate}
\end{pr}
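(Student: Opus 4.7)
The plan is to base all four estimates on the 2-dimensional local limit theorem (LLT): under our symmetry and non-singular covariance assumption, $P(S_n = 0) \leq C/n$ uniformly, $P(S_n = 0) \sim \sigma^2/n$, and hence $\sum_{t=1}^n P(S_t = 0) \sim \sigma^2 \log n$. Items (i) and (iv) are combinatorial expansions based on this; item (ii) is the delicate law-of-large-numbers statement from which (iii) follows by uniform integrability.

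For \textbf{(i)}, expand $E[(I_n^{[p]})^k] = \sum_{x_1, \ldots, x_k} E[\prod_{j=1}^k N_n(x_j)^p]$ as a sum over $kp$ time-points in $[1,n]$ arranged in $k$ blocks of size $p$, each block forced to land at a common site. Sort the $kp$ times chronologically and apply the Markov property: each of the $k(p-1)$ intra-block returns contributes a factor $\leq C/\Delta t$ by the LLT (the required displacement is $0$), while the $k$ inter-block first visits are summed freely over the displacement, each giving a factor of order $n$. A standard count gives the bound $C n^k (\log n)^{k(p-1)}$. Item \textbf{(iii)} then follows from (i) and (ii): (i) with $p=2$ and exponent $k+1$ gives $E[I_n^{k+1}] \leq C (n \log n)^{k+1}$, so $(I_n/(n \log n))^k$ is uniformly integrable; (ii)'s a.s.\ convergence is thus upgraded to $L^1$ convergence, yielding $E[I_n^k] \sim \sigma^{2k}(n \log n)^k$.

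For \textbf{(ii)}, follow Bolthausen \cite{MR972774}: the first moment asymptotic $E[I_n] \sim \sigma^2 n \log n$ is immediate from $E[I_n] = n + 2 \sum_{1 \leq s < t \leq n} P(S_{t-s} = 0)$ and the LLT, and a variance estimate $\mathrm{Var}(I_n) = o((n \log n)^2)$, obtained by expanding $I_n^2$ along the lines of (i) and carefully subtracting $(E[I_n])^2$, combined with Chebyshev gives convergence in probability. To upgrade to a.s.\ convergence, apply Borel--Cantelli along a subsequence $n_j = \lfloor \rho^j \rfloor$ with $\rho > 1$, and use the monotonicity of $n \mapsto I_n$ to sandwich the intermediate values; letting $\rho \downarrow 1$ removes the discrepancy.

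For \textbf{(iv)}, expand
\[
E[Q_n^k] = \sum_{(k_i, l_i)_{i=1}^k} P\Bigl(\bigcap_{i=1}^k \{S^{(1)}_{k_i} = S^{(2)}_{l_i}\}\Bigr).
\]
After ordering $k_1 < \ldots < k_k$ and grouping the $l_i$'s by the permutation $\sigma \in S_k$ with $l_{\sigma(1)} < \ldots < l_{\sigma(k)}$, set $y_i := S^{(1)}_{k_i} = S^{(2)}_{l_i}$. Independence of increments in each walk gives
\[
P = \sum_{y_1, \ldots, y_k} \prod_{i=1}^k P(S_{a_i} = y_i - y_{i-1}) \prod_{i=1}^k P(S_{b^*_i} = y_{\sigma(i)} - y_{\sigma(i-1)}),
\]
with $a_i := k_i - k_{i-1}$, $b^*_i := l_{\sigma(i)} - l_{\sigma(i-1)}$. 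Applying the LLT bound $P(S_t = \cdot) \leq C/t$ pointwise in the second product and collapsing the $y$-sum against the first (a convolution summing to $1$) gives $P \leq C^k \prod_i 1/b^*_i$; symmetrically $P \leq C^k \prod_i 1/a_i$. Their geometric mean $P \leq C^k \prod_i 1/\sqrt{a_i b^*_i}$ summed over $(a_i, b^*_i) \in [1,n]^{2k}$, using $\sum_{a=1}^n 1/\sqrt{a} = O(\sqrt n)$, yields $E[Q_n^k] = O(n^k)$. The main obstacle is (ii): because $I_n$ is only mildly concentrated around its mean, both the variance bound and the sandwich-along-subsequence argument require care; in (iv), the key trick is to geometrically average the two LLT-based bounds, which is what avoids a spurious logarithmic factor.
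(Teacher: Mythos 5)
Your proposal is essentially correct but follows a genuinely different, self-contained route. The paper handles Item (1) by a Minkowski-inequality trick: raise to the power $1/k$, sum over the smallest time index $l_1$ by the triangle inequality in $L^k$, use stationarity of the increments to reduce the inner sum to $N_n(0)^{p-1}$, and invoke Bolthausen's estimate $E[N_n(0)^m]\sim C(\log n)^m$; your chronological-ordering expansion with the local limit theorem bound $\sup_x P(S_t=x)\leq C/t$ reaches the same bound more laboriously but with the same mechanism underneath. For Items (2) and (4) the paper simply cites the literature (\cite[Theorem 1]{MR2290196} and \cite[Equation (2.10)]{MR2261059} respectively), whereas you reprove them: your Cauchy--Schwarz/geometric-mean combination of the two one-sided bounds $P\leq C^k\prod_i 1/a_i$ and $P\leq C^k\prod_i 1/b^*_i$ for Item (4) is exactly the standard argument behind the cited estimate and is sound (the degenerate configurations with coinciding time indices reduce to lower-order terms, as you implicitly assume). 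Item (3) via uniform integrability from (1) with $p=2$ matches the paper's one-line deduction.

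The one point that needs repair is the almost-sure convergence in Item (2). A variance bound of the form $\var(I_n)=o((n\log n)^2)$ gives convergence in probability via Chebyshev, but it is \emph{not} sufficient for your Borel--Cantelli step: along $n_j=\lfloor\rho^j\rfloor$ the Chebyshev probabilities are then merely $o(1)$, not summable. You need the sharper quantitative estimate $\var(I_n)=\mathcal{O}(n^2)$ (which is Bolthausen's Lemma 2.4-type bound and does hold under the hypotheses of Theorem \ref{quenched_clt2}); this yields $\var(I_{n_j})/(n_j\log n_j)^2=\mathcal{O}(j^{-2})$, which is summable, after which your monotonicity sandwich and $\rho\downarrow 1$ argument closes the proof. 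With that strengthening stated explicitly, your argument for (2) is complete; as written, the claimed variance bound does not support the conclusion you draw from it.
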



\begin{proof}[Proof of Proposition \ref{estimate_RW2}]
Items (1), (2) and (3) can be respectively found in  \cite[Lemma 1]{MR1311718}, \cite[Lemma 1]{MR1440255} and \cite[Lemma 6.2.2]{MR2584458}. 
\end{proof}

\begin{proof}[Proof of Proposition \ref{estimate_RW3}]
 We reduce the proof of Items (1), (2) and (3) to the case $k=1$, since it is not too difficult to show (see \cite[Proof of Lemma 5.2]{MR2094445}) that for all $k\geq 2$ there exists a constant $C>0$ such that $$E^{\otimes 2}[Q_n^k] \leq C [E^{\otimes 2}Q_n]^k.$$ The upper bound for  $k=1$ comes from
 \begin{align*}
  E^{\otimes 2}[Q_n] = \sum_{k,l=1}^n P^{\otimes 2}(S_k^{(1)} = S_l^{(2)}) &= \sum_{k,l=1}^n \sum_{x\in \mathbb{Z}^d} P(S_k = x)P(S_l = x) \\
  & = \sum_{k,l=1}^n P(S_{k+l}=0)\\
  &\leq C \sum_{k,l=1}^n (k+l)^{-\frac{d}{\alpha}}\\
  &\leq C \sum_{k=1}^n k^{1-\frac{d}{\alpha}},
 \end{align*}
which is enough to conclude. We have used the Chapman-Kolmogorov equation for symmetric random walks to go from line 1 to line 2, the Local Limit Theorem (see \cite[Theorem 1]{MR0222939}) from line 2 to line 3, and the fact that $d>\alpha$ from line 3 to line 4.
\end{proof}

\begin{proof}[Proof of Proposition \ref{estimate_RW4}]
 Item (2) was proved in \cite[Theorem 1]{MR2290196} and a proof of Item (4) can be found in \cite[Equation (2.10)]{MR2261059}. Since Item (3) is a consequence of Items (1) and (2), we only need to prove Item (1).  We have
\begin{align*}
 &E[(I_n^{[p]})^k]^{1/k}\\  &= E\left[ \left( \sum_{1\leq l_1, \ldots, l_p\leq n} \id_{\{S_{l_1} = S_{l_2} = \ldots = S_{l_p}\}} \right)^k \right]^{1/k}\\
 & \leq C \sum_{l_1=1}^{n} E\left[ \left(  \sum_{l_1 \leq l_2 \leq \ldots \leq l_p\leq n} \id_{\{S_{l_1} = S_{l_2} = \ldots = S_{l_p}\}} \right)^k \right]^{1/k} \mbox{\rm \quad by triangular inequality}\\
 & \leq C \sum_{l_1=1}^{n} E \left[ \left( \sum_{l_1 \leq l_2 \leq \ldots \leq l_p\leq n} \id_{\{S_{l_2- l_1} = \ldots = S_{l_p-l_{p-1}}=0\}} \right)^k \right]^{1/k} \mbox{\rm \quad by stationarity of the increments}\\
 &\leq C n \ E \left( N_n(0) ^{k(p-1)} \right)^{1/k}.
\end{align*}
Since (see the proof of Lemma 2.5 in \cite{MR972774} ) for any $m\geq 1$, 
$$E( N_n(0)^m) \sim C \log(n)^m,$$
Item (1) follows.

\end{proof}

We now focus on the case of a renewal process, that is when $(S_n)_{n\geq 0}$ is as in Theorem \ref{quenched_clt}. In this special setup,
$(S_n)_{n\geq 0}$ is increasing and we obviously have $N_n(i)\in\{0,1\}$ for all $n,i\geq1$. Therefore, $I_n = I_n^{[p]} = n$ for all $n,p\geq 1$. We define for all $p\geq1$,
\begin{equation}\label{defJn}
 J_n^{[p]} = \sum_{i\geq1} \overline{N_n(i)}^p,
\end{equation}
and we abbreviate $J_n^{[2]}$ as $J_n$.
\begin{pr}\label{estimates_renewal}Suppose $(S_n)_{n\geq0}$ is as in Theorem \ref{quenched_clt}. Then
 \begin{enumerate}
  \item $(Q_n/n)_{n\geq1}$ converges a.s to $1/m$.
  \item $(J_n/n)_{n\geq 1}$ converges a.s to $1-1/m$.
  \item For all $p\geq1$, there exists $C=C(p)$ such that
  \begin{equation*}
   J_n^{[p]} \leq Cn.
  \end{equation*}
  \item For all $k\geq1$,
  \begin{equation*}
   E[J_n^k] \stackrel{n \rightarrow +\infty}{\sim} (1-1/m)^k n^k.
  \end{equation*}
  \item There exists a constant $C$ such that
  \begin{equation*}
   E^{\otimes 2}\left[ \left(\frac{Q_n}{n} - \frac{1}{m} \right)^2\right] \leq \frac{C}{n}.
  \end{equation*}
 \end{enumerate}
\end{pr}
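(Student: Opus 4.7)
The strategy is to exploit the defining feature of the renewal case, namely $N_n(i)\in\{0,1\}$ (so that $I_n^{[p]}=n$ identically and $|\overline{N_n(i)}|\le 1$ for every $i,n$), and to prove the five items in the order (3), (5), (1), (2), (4).

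Item (3) is direct. For $p=1$, $J_n^{[1]}=\sum_i N_n(i)-\sum_i EN_n(i)=n-n=0$; for $p\ge 2$, $|\overline{N_n(i)}|^p\le\overline{N_n(i)}^2$, so $|J_n^{[p]}|\le J_n$. Using $N_n(i)^2=N_n(i)$, $\sum_i N_n(i)=n$, and $\sum_i EN_n(i)=n$, one expands $J_n=n-2\sum_i N_n(i)EN_n(i)+\sum_i (EN_n(i))^2\le 2n$.

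For Item (5), I split
\[E^{\otimes 2}\bigl[(Q_n/n-1/m)^2\bigr]=\bvar(Q_n)/n^2+(E^{\otimes 2}[Q_n]/n-1/m)^2\]
(variance and expectation computed under $P^{\otimes 2}$) and bound each piece by $O(1/n)$. For the bias, $E^{\otimes 2}[Q_n]=\sum_i (EN_n(i))^2$; combining Blackwell's renewal theorem ($u(i):=P(i\in R_\infty)\to 1/m$) with a central-limit-theorem-type control on the first-hitting time $\tau_i$ for $i\sim mn$ (fluctuations of size $\sqrt n$, available under $E[X_1^2]<\infty$) yields $|E^{\otimes 2}[Q_n]-n/m|=O(\sqrt n)$. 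For the variance, decompose $Q_n-E^{\otimes 2}Q_n=U+V$ with $U=Q_n-E^{(2)}[Q_n\mid S^{(1)}]$ and $V=E^{(2)}[Q_n\mid S^{(1)}]-E^{\otimes 2}Q_n$; since $E[UV]=0$, $\bvar(Q_n)=E[U^2]+E[V^2]$. Conditionally on $S^{(1)}$, $U$ is the centered sum of the $n$ Bernoullis $\ind{i\in R_n^{(2)}}$ with $i\in R_n^{(1)}$; a strong-Markov decomposition at hitting times combined with renewal estimates bounds their pairwise covariances and yields $E[U^2]=O(n)$, and $E[V^2]$ is handled similarly by conditioning on $S^{(2)}$ instead.

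Items (1), (2), (4) then follow readily. For (1), $Q_n$ is non-decreasing in $n$, so from (5) and Borel--Cantelli $Q_{n^2}/n^2\to 1/m$ a.s., and the interpolation $Q_{n^2}/(n+1)^2\le Q_k/k\le Q_{(n+1)^2}/n^2$ for $k\in[n^2,(n+1)^2]$ extends this to all $k$. For (2), I use the identity $J_n=n-2E^{(2)}[Q_n\mid S^{(1)}]+E^{\otimes 2}[Q_n]$ obtained by the same expansion as in (3); by (5), $E^{\otimes 2}[Q_n]/n\to 1/m$, and the conditional bound $\bvar(E^{(2)}[Q_n\mid S^{(1)}])\le\bvar(Q_n)=O(n)$ together with the fact that $n\mapsto E^{(2)}[Q_n\mid S^{(1)}]$ is almost surely non-decreasing promote the resulting $L^2$ convergence to almost sure convergence via the same subsequence argument, giving $J_n/n\to 1-2/m+1/m=1-1/m$ a.s. Finally, (4) is a dominated convergence step: (2) provides almost sure convergence and (3) supplies the uniform bound $0\le J_n/n\le 2$. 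The main obstacle is the variance estimate in (5), where quantitative control of renewal covariances is required; this is the step where $E[X_1^2]<\infty$ is genuinely needed.
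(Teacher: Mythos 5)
Your items (2), (3) and (4) essentially coincide with the paper's treatment (the same expansion of $J_n$ using $\sum_i N_n(i)^2=\sum_i N_n(i)=n$ and the identity $\sum_i N_n(i)EN_n(i)=E^{(2)}[Q_n\mid S^{(1)}]$, followed by bounded convergence), and your derivation of item (1) from item (5) by Chebyshev along the subsequence $n^2$ plus monotone interpolation is a valid alternative to the paper's direct argument, which instead observes that $S^{\cap}:=S^{(1)}\cap S^{(2)}$ is itself a renewal process with interarrival mean $m^2$ and reads off $Q_n\sim n/m$ a.s.\ by evaluating it at $\sigma_n=\min(S^{(1)}_n,S^{(2)}_n)\sim mn$. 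The genuine divergence is in item (5), and there your sketch is the weak link. The paper embeds the intersection points back into each walk, defining renewal processes $U,V$ by $S^{\cap}_n=S^{(1)}_{U_n}=S^{(2)}_{V_n}$, so that $Q_n=\min(N^U_n,N^V_n)$ with $N^U_n=|U\cap\{1,\dots,n\}|$; since $EU_1=m$ and $EU_1^2<+\infty$ (because $U_1\le S^{\cap}_1$ and $E[(S^{\cap}_1)^2]<+\infty$, a cited fact about intersections of renewal processes), both the bias and the variance of $Q_n/n$ reduce to the textbook estimates $EN^U_n-n/m=\mathcal{O}(1)$ and $\var N^U_n=\mathcal{O}(n)$ for a single renewal counting process with square-integrable interarrivals. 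You instead attack $E^{\otimes2}Q_n-n/m$ and $\bvar(Q_n)$ head-on. The bias bound $\mathcal{O}(\sqrt n)$ is correct but needs more than Blackwell's theorem: controlling $\sum_i\bigl(u(i)^2-1/m^2\bigr)$ requires the second-order renewal theorem (equivalently $\sum_k|u(k)-1/m|<+\infty$, which holds precisely because $E[X_1^2]<+\infty$), not merely $u(i)\to1/m$. The variance bound $\bvar(Q_n)=\mathcal{O}(n)$ is likewise true, but ``a strong-Markov decomposition bounds the pairwise covariances'' is an assertion rather than an argument: one must show that $\sum_j|\cov(\ind{\{i\in R_n\}},\ind{\{j\in R_n\}})|$ is bounded uniformly in $i$, including in the boundary window $i,j= mn+\mathcal{O}(\sqrt n)$ where the truncation at time $n$ itself creates correlations of order one. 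This can be carried out with the same second-order renewal inputs, but as written item (5) is a programme rather than a proof; the paper's $Q_n=\min(N^U_n,N^V_n)$ identity is exactly the shortcut that makes this step routine, and you would do well to adopt it.
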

 
\begin{rmk}\label{intersection} We will use several times the following identity: if $S^{(1)},\ldots, S^{(k)}$ are $k$ independent copies of  a renewal process $S$, then
 \begin{equation*}
  \sum_{i\geq1} \prod_{j=1}^k N^{(j)}_n(i) = \left| \bigcap_{j=1}^k \{S^{(j)}_1,\ldots,S^{(j)}_n\} \right|.
 \end{equation*}
\end{rmk}

\begin{proof}[Proof of Proposition \ref{estimates_renewal}]
 Let us first prove Item (1). By using Remark \ref{intersection} with $k=2$, we get
\begin{equation*}
 \sum_{i\geq1}N^{(1)}_n(i)N^{(2)}(i) = Q_n := \left| \{S^{(1)}_1,\ldots, S^{(1)}_n\} \cap \{S^{(2)}_1,\ldots, S^{(2)}_n\}\right|.
\end{equation*}
Remark that $S^{\cap}:= S^{(1)}\cap S^{(2)}$ is again a renewal process with interarrival mean (ie. the mean of the increments) equal to
$$\lim_{n\rightarrow +\infty} P(n\in S^{\cap})^{-1} = \lim_{n\rightarrow +\infty} P(n\in S)^{-2} = m^2$$ by the Renewal Theorem (see \cite[Theorem 2.2, Chapter I]{Asmussen}). Therefore we get $$\left|S^{\cap} \cap \{1,\ldots,n\}\right| \stackrel{n\rightarrow +\infty}{\sim} n/m^2.$$ But $$Q_n = S^{\cap}_{\sigma_n}$$ where $$\sigma_n := \min(S^{(1)}_n,S^{(2)}_n),$$ and $\sigma_n \stackrel{n\rightarrow +\infty}{\sim} mn$ almost surely, which gives $$Q_n \stackrel{n\rightarrow +\infty}{\sim} n/m.$$
Let us now prove Item (2). We have
 \begin{equation*}
  \frac{1}{n}\sum_{i\geq1} \barre{N_n(i)}^2 = \frac{1}{n}\sum_{i\geq1} N_n(i)^2 - \frac{2}{n}\sum_{i\geq1} N_n(i)E[N_n(i)] + \frac{1}{n}\sum_{i\geq1} (E[N_n(i)])^2.
 \end{equation*}
First,
\begin{equation*}
 \sum_{i\geq1} N_n(i)^2 = \sum_{i\geq1} N_n(i) =  n.
\end{equation*}
We conclude by remarking that
\begin{align*}
 &\sum_{i\geq1} N_n(i)E[N_n(i)] = E^{(2)} \sum_{i\geq1} N_n(i)N_n^{(2)}(i) = E^{(2)}Q_n \sim n/m,\\
 &\sum_{i\geq1} (E[N_n(i)])^2 = E^{\otimes 2} \sum_{i\geq1} N_n^{(1)}(i)N_n^{(2)}(i) = E^{\otimes 2} Q_n \sim n/m,
\end{align*}
where the asymptotics come from Item (1) and the Bounded Convergence Theorem.
We go now to the proof of Item (3). For all integers $l\geq 2$,
\begin{equation*}
|J_n^{[l]}| \leq \sum_{i\geq1} |\barre{N_n(i)}|^l \leq \sum_{i\geq 1} (N_n(i) + EN_n(i))^l
\leq 2^{l-1} \left(\sum_{i\geq1} N_n(i)^l + \sum_{i\geq1} [EN_n(i)]^l \right).
\end{equation*}
But $\sum_{i\geq1} N_n(i)^l = \sum_{i\geq1} N_n(i) = n$ and (see Remark \ref{intersection})
\begin{equation*}
\sum_{i\geq1} [EN_n(i)]^l = E^{\otimes l} \sum_{i\geq1} N_n^{(1)}(i)\ldots N_n^{(l)}(i) 
= \left| \bigcap_{j=1}^l \{\tau^{(j)}_1,\ldots,\tau^{(j)}_n\} \right|
\leq n.\label{aux2}
\end{equation*}
Item (4) is a straightforward consequence of Items (2) and (3) (by the Bounded Convergence Theorem). We finish with the proof of Item (5).  Let $U = (U_n)_{n\geq1}$ and $V = (V_n)_{n\geq1}$ be the two identically distributed renewal processes defined by 
 \begin{equation}\label{equ1_lemma3}
  S^{\cap}_n = S^{(1)}_{U_n} = S^{(2)}_{V_n}.
 \end{equation}
Then $$Q_n = \min(N_n^U,N_n^U),$$ where 
\begin{align*}
N_n^U &:= U \cap \{1,\ldots,n\},\\N_n^V &:= V \cap \{1,\ldots,n\},
\end{align*}
and so,
\begin{align*}
 E^{\otimes2}\left( \frac{Q_n}{n} - \frac{1}{m} \right)^2 &\leq E^{\otimes 2} \left( \frac{N_n^U}{n} - \frac{1}{m}  \right)^2  + E^{\otimes 2} \left( \frac{N_n^V}{n} - \frac{1}{m}  \right)^2\\
 & = 2 E^{\otimes 2} \left( \frac{N_n^U}{n} - \frac{1}{m}  \right)^2 
\end{align*}
From Equation (\ref{equ1_lemma3}) and the Renewal Theorem, we get $$U_n \sim V_n \stackrel{n\rightarrow +\infty}{\sim} nm$$ almost surely, meaning that $$EU_1 = EV_1 = m.$$ Moreover, $EU_1^2 < +\infty$ since $$U_1 \leq S^{(1)}_{U_1} = S^{\cap}_1$$ and $E[(S^{\cap}_1)^2]<+\infty$ from \cite[Proposition 3.2]{MR1395609}. Then we get
\begin{equation*}
 E^{\otimes 2}\left[ \left( \frac{N_n^U}{n} - \frac{1}{m}  \right)^2 \right]  \leq 2 \frac{\var{N_n^U}}{n^2} + 2\left( \frac{EN_n^U}{n} - \frac{1}{m}\right)^2 = \mathcal{O}\left(\frac{1}{n}\right)
\end{equation*}
by using \cite[Propositions 6.1 and 6.3, Chapter V]{Asmussen}.
\end{proof}

\section{Proof of the results}

\subsection{Proof of Theorem \ref{quenched_clt}}

We suppose $S$ is as in Theorem \ref{quenched_clt}. For all $k\geq1$, we denote by $m_n(k)$ the moments $$m_n(k) :=  E[(\barre{Z_n}/\sqrt{n})^k].$$
Let us recall that if $Z$ is a standard Gaussian random variable, then for all $k\geq1$, $E[Z^{2k-1}]=0$ and $E[Z^{2k}] = (2k-1)!! := (2k-1)\times(2k-3)\times\ldots\times3\times 1$.

 \subsubsection{Control of the moments}
 
 In the following proof we denote by c.f. a combinatorial factor whose precise value is irrelevant and $c_k$ will be a constant only depending on $k$ which may change from line to line.
 
 \begin{lem}[Convergence of quenched averaged moments]\label{lemma2} For all $k\geq1$,
  \begin{align}
  & \mathbb{E}[m_n(2k)] \stackrel{n\rightarrow +\infty}{\longrightarrow} (1-1/m)^k (2k-1)!!\label{even}\\
  &  \mathbb{E}[m_n(2k-1)] =0 \label{odd}
  \end{align}
 \end{lem}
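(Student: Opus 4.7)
The plan is to compute $\mathbb{E}[m_n(p)]$ by Fubini as $E\bigl[\mathbb{E}[(\barre{Z}_n/\sqrt{n})^{p}]\bigr]$, performing the scenery average first. Writing $\barre{Z}_n = \sum_{i\geq 1} \omega_i \barre{N_n(i)}$ and expanding yields
\[ \mathbb{E}[\barre{Z}_n^p] = \sum_{i_1,\ldots,i_p \geq 1} \mathbb{E}[\omega_{i_1}\cdots\omega_{i_p}]\, \barre{N_n(i_1)}\cdots\barre{N_n(i_p)}. \]
Since the $\omega_i$ are i.i.d.\ and symmetric, the factor $\mathbb{E}[\omega_{i_1}\cdots\omega_{i_p}]$ vanishes unless every distinct index appears an even number of times. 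For odd $p = 2k-1$ this is impossible, so \eqref{odd} follows immediately.

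For $p = 2k$, the surviving contributions are indexed by set partitions $\pi$ of $\{1,\ldots,2k\}$ whose blocks all have even size. Grouping such partitions by their block-size profile $(2a_1,\ldots,2a_\ell)$ with $\sum_m a_m = k$, the contribution takes the form
\[ \mathrm{c.f.}\cdot \prod_{m=1}^\ell \mathbb{E}[\omega_0^{2a_m}] \sum_{j_1,\ldots,j_\ell\text{ distinct}} \prod_{m=1}^\ell \barre{N_n(j_m)}^{2a_m}. \]
The dominant term is $\ell=k$, i.e.\ all blocks of size two: there are $(2k-1)!!$ such pair partitions and $\mathbb{E}[\omega_0^2]=1$, giving $(2k-1)!! \sum_{j_1,\ldots,j_k\text{ distinct}} \barre{N_n(j_1)}^2 \cdots \barre{N_n(j_k)}^2$. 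Completing the diagonal turns this sum into $J_n^k$ at the price of additional coincidence terms of the same form as the subleading profiles.

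Each subleading term, be it from a profile with $\ell\leq k-1$ or from a diagonal coincidence inside the leading profile, is controlled in absolute value by $c_k \prod_{m=1}^{\ell} J_n^{[2a_m]}$ with $\ell \leq k-1$ and $\sum_m a_m \leq k$. Proposition \ref{estimates_renewal} Item (3) yields $J_n^{[p]} \leq C(p)\, n$, so every such product is $\mathcal{O}(n^{k-1})$. Dividing by $n^k$ and taking the walk expectation therefore gives
\[ \mathbb{E}[m_n(2k)] = (2k-1)!!\,\frac{E[J_n^k]}{n^k} + \mathcal{O}\!\left(\tfrac{1}{n}\right), \]
and Proposition \ref{estimates_renewal} Item (4) shows that $E[J_n^k]/n^k \to (1-1/m)^k$, establishing \eqref{even}.

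The only real obstacle is bookkeeping: one must verify that the combinatorial factor attached to the pair-partition profile is exactly $(2k-1)!!$, and that the "diagonal completion" step converting $\sum_{\text{distinct}}$ into $J_n^k$ produces only remainders of order $n^{k-1}$. Both are elementary once the expansion is organized by partition type and the uniform bound $J_n^{[p]}\leq C(p)n$ is invoked.
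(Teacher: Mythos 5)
Your proposal is correct and follows essentially the same route as the paper: expand $\mathbb{E}[\barre{Z}_n^{2k}]$ over scenery indices, use symmetry and independence to reduce to even-multiplicity partitions, isolate the pair-partition term $(2k-1)!!\,E[J_n^k]$, and absorb all lower-multiplicity and diagonal-coincidence terms into $\mathcal{O}(n^{k-1})$ via the pathwise bound $J_n^{[p]}\leq C(p)n$ of Proposition \ref{estimates_renewal}, Item (3), before concluding with Item (4). No gaps.
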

 
 \begin{proof}
 Equation (\ref{odd}) is a direct consequence of Assumption (\ref{asspt1}). Let $k\geq1$ and let us prove Equation (\ref{even}). We have
 \begin{eqnarray*}
 \mathbb{E}[m_n(2k)] &= &\mathbb{E}E[(\barre{Z_n}/\sqrt{n})^{2k}]\\
 &=& \frac{1}{n^k} \sum_{i_1,\ldots, i_{2k}\geq 1} \mathbb{E}(\omega_{i_1}\ldots \omega_{i_{2k}}) E\left[ \barre{N_n(i_1)}\ldots \barre{N_n(i_{2k})}\right]\\
 &=& \frac{1}{n^k} \sum_{j=1}^k C_n(j)
 \end{eqnarray*}
 where
 $$
 C_n(j) := \mathrm{c.f.} \sum_{\substack{i_1,\ldots ,i_j \geq 1 \\ 
 p \neq q \Rightarrow i_p \neq i_q\\ l_1 + \ldots + l_j = 2k;\  l_j \in 2\mathbb{N}^*}} \left[ \prod_{m=1}^j \mathbb{E}[\omega_{i_m}^{l_m}] \right] E\left[\prod_{m=1}^j \barre{N_n(i_m)}^{l_m} \right]
$$
The above sum is restricted to even $l_j$'s because of Assumption (\ref{asspt1}).
For any $ j\leq k-1$, we have
 \begin{eqnarray*}
 C_n(j) &\leq & c_k \sum_{\substack{i_1,\ldots ,i_j \geq 1 \\ l_1 + \ldots + l_j = 2k\\ l_j \in 2\mathbb{N}^*}} \left[ \prod_{m=1}^j \mathbb{E}[\omega_{i_m}^{l_m}] \right] E\left[\prod_{m=1}^j \barre{N_n(i_m)}^{l_m} \right]\\
 &\leq & c_k \sum_{\substack{l_1 + \ldots + l_j = 2k\\ l_j \in 2\mathbb{N}^*}} E\left[ \left( \sum_{i\geq1} \barre{N_n(i)}^{l_1} \right) \ldots \left( \sum_{i\geq1} \barre{N_n(i)}^{l_j} \right) \right]\\
 & = & c_k \sum_{\substack{l_1 + \ldots + l_j = 2k\\ l_j \in 2\mathbb{N}^*}} E[J_n^{[l_1]}\ldots J_n^{[l_j]}]\\
 & \leq & c_k n^j \mbox{\rm \quad from Item (3) of Proposition \ref{estimates_renewal}.}
 \end{eqnarray*}
Therefore,
 \begin{equation}\label{moment1}
 \mathbb{E}[m_n(2k)] = \frac{C_n(k)}{n^k} + \mathcal{O}\left(\frac{1}{n}\right).
 \end{equation}
Let us now compute $C_n(k)$. We have:
\begin{equation*}
C_n(k) = (2k-1)!! \sum_{\substack{i_1,\ldots, i_k \geq 1\\ \mathrm{all\,distinct}}} \mathbb{E}(\omega_{i_1}^2)\ldots \mathbb{E}(\omega_{i_k}^2)E\left[\barre{N_n(i_1)}^2 \ldots \barre{N_n(i_k)}^2\right],
\end{equation*}
$(2k-1)!!$ being the number of pairings of $2k$ elements. From Assumption (\ref{asspt4}) we get
\begin{align*}
C_n(k) &= (2k-1)!! \sum_{\substack{i_1,\ldots, i_k \geq 1\\ \mathrm{all\,distinct}}} E\left[\barre{N_n(i_1)}^2 \ldots \barre{N_n(i_k)}^2\right]\\
&= (2k-1)!! E[J_n^k] \\&\hspace{1cm}- (2k-1)!! \sum_{\substack{i_1,\ldots, i_k \geq 1\\ i_p =i_q {\rm for\,at\,least}\\ {\rm one\,couple } (p,q)}} E\left[\barre{N_n(i_1)}^2 \ldots \barre{N_n(i_k)}^2\right].
\end{align*}
With the same argument as in the case $j\leq k-1$, the second term is in $\mathcal{O}(n^{k-1})$. From Item (4) of Proposition \ref{estimates_renewal} we then get
\begin{equation}\label{moment2}
\frac{C_n(k)}{n^k} \rightarrow \left(1- \frac{1}{m}\right)^k \times (2k-1)!!.
\end{equation}
With Equations (\ref{moment1}) and (\ref{moment2}) we conclude the proof.
\end{proof}

\begin{lem}[Control of the variance of moments]\label{lemma4}
For all $k\geq1$,
\begin{enumerate}
 \item $\bvar[m_n(2k)] = \mathcal{O}(1/n)$
 \item $\bvar[m_n(2k+1)] = \mathcal{O}(1/\sqrt{n})$
\end{enumerate}
\end{lem}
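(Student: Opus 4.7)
The plan is to compute $\bvar[m_n(k)] = \mathbb{E}[m_n(k)^2] - (\mathbb{E}[m_n(k)])^2$ by viewing $m_n(k)$ as a degree-$k$ polynomial in the scenery. Writing $\barre{Z_n} = \sum_x \omega_x \barre{N_n(x)}$ and introducing two independent copies $S^{(1)}, S^{(2)}$ of the walk, one obtains the representation
\begin{equation*}
\bvar[m_n(k)] = \frac{1}{n^k} \sum_{I,J} c(I,J)\, E\Big[\prod_{p=1}^k \barre{N_n^{(1)}(i_p)}\Big] E\Big[\prod_{q=1}^k \barre{N_n^{(2)}(j_q)}\Big],
\end{equation*}
where $I=(i_1,\ldots,i_k)$, $J=(j_1,\ldots,j_k)$ range over multi-indices and $c(I,J) := \mathbb{E}[\omega_{i_1}\cdots\omega_{i_k}\omega_{j_1}\cdots\omega_{j_k}] - \mathbb{E}[\omega_{i_1}\cdots\omega_{i_k}]\mathbb{E}[\omega_{j_1}\cdots\omega_{j_k}]$. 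By independence and the symmetry assumption (\ref{asspt1}), $c(I,J)$ vanishes unless every value in $I\cup J$ has even total multiplicity \emph{and} at least one value appears in both $I$ and $J$.

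I would then regroup the sum according to the set partition $\pi$ of $\{1,\ldots,k\}\sqcup\{1',\ldots,k'\}$ induced by the equivalence ``positions carry the same value''. Only partitions with all blocks of even size and at least one \emph{cross block} (mixing $I$- and $J$-positions) survive. The dominant contribution comes from pair matchings (every block of size 2); let $c$ denote the number of cross pairs. Counting $I$- and $J$-positions separately forces $c \equiv k \pmod 2$, so $c\geq 2$ for $k$ even and $c\geq 1$ for $k$ odd.

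For such a matching, summing over the values attached to its blocks (and absorbing the distinctness constraints into lower-order errors by inclusion-exclusion) yields the walk expectation
\begin{equation*}
E^{\otimes 2}\Big[ R_n^{\,c}\, (J_n^{(1)})^{(k-c)/2}(J_n^{(2)})^{(k-c)/2}\Big],\qquad R_n:=\sum_{x\geq1}\barre{N_n^{(1)}(x)}\,\barre{N_n^{(2)}(x)}.
\end{equation*}
The random variable $R_n$ is the double centering of $Q_n$, namely $R_n=Q_n-E^{(1)}[Q_n]-E^{(2)}[Q_n]+E^{\otimes 2}[Q_n]$; Item (5) of Proposition \ref{estimates_renewal}, together with Jensen's inequality applied to conditional expectations, gives $E^{\otimes 2}[R_n^2]=\mathcal{O}(n)$. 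Since $J_n^{(i)}\leq Cn$ deterministically (Item (3) of Proposition \ref{estimates_renewal}), Cauchy-Schwarz also yields $|R_n|\leq Cn$. Combining these,
\begin{equation*}
E^{\otimes 2}[|R_n|^c] \leq C n^{c-2}\, E^{\otimes 2}[R_n^2] = \mathcal{O}(n^{c-1}) \quad \text{for } c\geq 2, \qquad E^{\otimes 2}[|R_n|] \leq \sqrt{E^{\otimes 2}[R_n^2]} = \mathcal{O}(\sqrt n),
\end{equation*}
so that the pair-matching contribution is at most $C n^{k-c}\cdot E^{\otimes 2}[|R_n|^c]$, which is $\mathcal{O}(n^{k-1})$ when $c\geq 2$ and $\mathcal{O}(n^{k-1/2})$ when $c=1$.

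Dividing by $n^k$ produces the announced $\bvar[m_n(2k)]=\mathcal{O}(1/n)$ (where only $c\geq 2$ arises) and $\bvar[m_n(2k+1)]=\mathcal{O}(1/\sqrt n)$ (the case $c=1$ being the bottleneck). Partitions containing at least one block of size $\geq 4$ have strictly fewer than $k$ blocks and contribute at most $\mathcal{O}(1/n)$ by the same kind of estimate, hence do not worsen the bounds. The main obstacle I foresee is purely combinatorial: converting the raw multi-index sum into a sum over set partitions, controlling the errors from dropping the distinctness constraints on the values attached to different blocks, and recognizing that the coupling between the two walks forced by the shared scenery materializes precisely as the factor $R_n^c$ in the walk expectation.
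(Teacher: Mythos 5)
Your proposal is correct and follows essentially the same route as the paper: expand the variance over multi-indices, isolate the pure pairings (which cancel against the squared mean), control the cross-pairing terms through the doubly centered intersection quantity $R_n=\sum_i \barre{N^{(1)}_n(i)}\,\barre{N^{(2)}_n(i)}$ via the deterministic bound $|R_n|\le Cn$ and the $L^2$ estimate $E^{\otimes 2}[(Q_n/n-1/m)^2]=\mathcal{O}(1/n)$ from Item (5) of Proposition \ref{estimates_renewal}, and dismiss partitions with blocks of size at least $4$ as lower order. The parity argument forcing $c\ge 2$ cross pairs in the even case and allowing the bottleneck $c=1$ in the odd case is exactly how the paper obtains the $\mathcal{O}(1/n)$ versus $\mathcal{O}(1/\sqrt{n})$ rates.
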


\begin{proof}
Let us start with the variance of even moments. Let $k\geq1$. From the proof of Lemma \ref{lemma2} we get on the one hand
\begin{align*}
[\mathbb{E}m_n(2k)]^2 &= \left( (2k-1)!! E\left[\left( \frac{\sum_{i\geq1} \barre{N_n(i)}^2}{n}\right)^k \right] \right)^2 + \mathcal{O}(1/n)\\
&=\left( (2k-1)!!  E[(J_n/n)^k]\right)^2 + \mathcal{O}(1/n).
\end{align*}
On the other hand,
\begin{align}
&\mathbb{E}[m_n(2k)^2] = \mathbb{E}\Big[E[(\barre{Z_n}/\sqrt{n})^{2k}]^2 \Big]\nonumber \\
& = \frac{1}{n^{2k}}\mathbb{E}\left[\left( \sum_{i_1,\ldots,i_{2k}\geq 1} \omega_{i_1}\ldots \omega_{i_{2k}} E[\barre{N_n(i_1)}\ldots\barre{N_n(i_{2k})}] \right)^2\right]\nonumber \\
& = \frac{1}{n^{2k}} \sum_{\substack{i_1,\ldots , i_{2k}\geq 1\\ j_1,\ldots, j_{2k}\geq 1}} \mathbb{E}[\omega_{i_1}\ldots \omega_{i_{2k}}\omega_{j_1}\ldots \omega_{j_{2k}}]E\left[\barre{N_n(i_1)}\ldots\barre{N_n(i_{2k})}\right]\\&\hspace{6.5cm}\times
E\left[\barre{N_n(j_1)}\ldots\barre{N_n(j_{2k})}\right]\label{Sum}\\
& =: \Sigma_1(n) + \Sigma_2(n) + \Sigma_3(n).\label{sigma}
\end{align}
where $\Sigma_1(n)$ corresponds to pairings of the set $\{i_1,\ldots, i_{2k},j_1,\ldots,j_{2k}\}$ which associate an index with another index of the same group (the two groups of indices being $\{i_1,\ldots,i_k\}$ and $\{j_1,\ldots,j_k\}$). It is equal to
\begin{align*}
& \frac{(2k-1)!!^2}{n^{2k}}\mathbb{E}[\omega_1^2]^{2k} \sum_{\substack{i_1,\ldots , i_{k}\geq 1\\ j_1,\ldots, j_{k}\geq 1\\ \rm{all\,distinct}} } E\left[\barre{N_n(i_1)}^2\ldots \barre{N_n(i_k)}^2\right] 
E\left[\barre{N_n(j_1)}^2\ldots \barre{N_n(j_k)}^2\right]\\
&=  - \frac{(2k-1)!!^2}{n^{2k}}  \sum_{\substack{i_1,\ldots, i_k \geq 1\\j_1,\ldots, j_k \geq 1\\ {\rm at\,least\,two}\\ {\rm \,indexes\,equal}}} E^{\otimes 2}\left[ \barre{N^{(1)}_n(i_1)}^2 \ldots \barre{N^{(1)}_n(i_k)}^2 \barre{N^{(2)}_n(j_1)}^2\ldots \barre{N^{(2)}_n(j_k)}^2\right]\\& \hspace{8cm}+ \frac{(2k-1)!!^2}{n^{2k}} (EJ_n^k)^2.
\end{align*}
Since
\begin{align}
\left| \sum_{i\geq1} \barre{N^{(1)}_n(i)}^k \barre{N^{(2)}_n(i)}^l \right| &
\leq \sqrt{\sum_{i\geq1} \barre{N^{(1)}_n(i)}^{2k}} \sqrt{\sum_{i\geq 1} \barre{N^{(2)}_n(i)}^{2l}}\label{e1} \\
& =\sqrt{J_n^{[2k](1)}J_n^{[2l](2)}} \label{e2}\\&\leq c_{k,l} \times n, \label{e3}
\end{align}
from item (3) of Proposition \ref{estimates_renewal},  the first term in the line above is in $\mathcal{O}(1/n)$. The term $\Sigma_2(n)$ corresponds to mixed pairings, that is pairings for which an index of the set $\{i_1,\ldots, i_{2k}\}$ is paired with an index of the set $\{j_1,\ldots, j_{2k}\}$. More precisely we have:
\begin{equation*}
\Sigma_2(n) = \frac{1}{n^{2k}} \sum_{\substack{2\leq p \leq 2k\\ p\,{\rm even }}} A(p)
\end{equation*}
where $A(p)$ is equal to
\begin{align}
& {\rm c.f.} \sum_{\substack{ k_1,\ldots, k_p \geq1 \\ i_1,\ldots, i_{k-p/2}\geq 1 \\j_1,\ldots, j_{k-p/2}\geq 1\\ \rm{all\, distinct} }} E^{\otimes 2}\left[\barre{N^{(1)}_n(i_1)}^2 \ldots \barre{N^{(1)}_n(i_{k-p/2})}^2  \barre{N^{(2)}_n(j_1)}^2 \ldots \barre{N^{(2)}_n(j_{k-p/2})}^2 \right. \nonumber\\
&\hspace{5.5cm} \left. \times \barre{N^{(1)}_n(k_1)}\barre{N^{(2)}_n(k_1)}\ldots \barre{N^{(1)}_n(k_p)}\barre{N^{(2)}_n(k_p)}\right] \nonumber \\
&\leq c_k E^{\otimes 2}\left[\left(\sum_{i\geq 1} \barre{N^{(1)}_n(i)}^2 \right)^{k-p/2} \left(\sum_{i\geq 1} \barre{N^{(2)}_n(i)}^2 \right)^{k-p/2} \left( \sum_{i\geq 1} \barre{N^{(1)}_n(i)} \barre{N^{(2)}_n(i)} \right)^p\right]\nonumber\\
&\hspace{8cm} + \mathcal{O}(n^{2k-1})\nonumber \\
& := c_k E^{\otimes 2}\left([J_n^{(1)}]^{k-p/2}[J_n^{(2)}]^{k-p/2} \left( \sum_{i\geq 1} \barre{N^{(1)}_n(i)} \barre{N^{(2)}_n(i)} \right)^p\right) + \mathcal{O}(n^{2k-1}) \nonumber
\end{align} 
Since
$$
\sum_{i\geq 1} \barre{N^{(1)}_n(i)} \barre{N^{(2)}_n(i)} = Q_n - E^{(1)}Q_n - E^{(2)}Q_n + E^{\otimes 2} Q_n,
$$
we have
\begin{align}
&\left(\sum_{i\geq 1} \barre{N^{(1)}_n(i)} \barre{N^{(2)}_n(i)} \right)^p = \left[ \left(\sum_{i\geq 1} \barre{N^{(1)}_n(i)} \barre{N^{(2)}_n(i)} \right)^2\right]^{p/2} \\&= (4n)^p \left[ \left(\frac{1}{4n}\sum_{i\geq 1} \barre{N^{(1)}_n(i)} \barre{N^{(2)}_n(i)} \right)^2\right]^{p/2}\nonumber \\
&\leq c_k n^p \left(\frac{1}{n}\sum_{i\geq 1} \barre{N^{(1)}_n(i)} \barre{N^{(2)}_n(i)} \right)^2 \label{aux3}\\
&\leq c_k n^p \left[\left(\frac{Q_n}{n} - \frac{1}{m}\right)^2 + \left(\frac{E^{(1)}Q_n}{n} - \frac{1}{m}\right)^2 + \left(\frac{E^{(2)}Q_n}{n} - \frac{1}{m}\right)^2 \right.\nonumber\\& \hspace{7cm}\left.+ \left(\frac{E^{\otimes 2}Q_n}{n} - \frac{1}{m}\right)^2 \right]\label{aux4}.
\end{align}
Using $[EX]^2 \leq E[X^2]$ we get
\begin{align*}
&\left[ \sum_{i\geq 1} \barre{N^{(1)}_n(i)} \barre{N^{(2)}_n(i)} \right]^p \leq c_k n^p \left[\left(\frac{Q_n}{n} - \frac{1}{m}\right)^2 + E^{(1)}\left(\frac{Q_n}{n} - \frac{1}{m}\right)^2 +\ldots \right. \\  &\hspace{5cm} \left.+ E^{(2)}\left(\frac{Q_n}{n} - \frac{1}{m}\right)^2 + E^{\otimes 2}\left(\frac{Q_n}{n} - \frac{1}{m}\right)^2 \right].
\end{align*}
Since 
\begin{equation*}
[J_n^{(1)}]^{k-p/2}[J_n^{(2)}]^{k-p/2} \leq c_k n^{2k-p},
\end{equation*}
from Item (3) of Proposition \ref{estimates_renewal}, finally using Item (5) of Proposition \ref{estimates_renewal}, we get
\begin{equation*}
A(p) \leq c_k n^{2k} E^{\otimes 2}\left(\frac{Q_n}{n} - \frac{1}{m} \right)^2 = \mathcal{O}(n^{2k-1}),
\end{equation*}
so that:
\begin{equation*}
\Sigma_2(n) = \mathcal{O}(1/n).
\end{equation*}
As for $\Sigma_3(n)$, it is the part of the sum in Equation (\ref{Sum}) when at least four of the indices $i_1,\ldots,i_{2k},j_1,\ldots, j_{2k}$ are equal. 
By combining item (3) of Proposition \ref{estimates_renewal} and equations (\ref{e1}), (\ref{e2}) and (\ref{e3}), we have
\begin{equation*}
\Sigma_3(n) = \mathcal{O}(1/n),
\end{equation*}
which ends the proof for even moments. Let us now consider the variance of odd moments. Let $k\geq1$. On one side $\mathbb{E}[m_n(2k+1)] = 0$. Therefore,
\begin{align*}
&\bvar(m_n(2k+1))= \mathbb{E}(m_n(2k+1)^2) = \mathbb{E}\left[ E[(\barre{Z_n}/\sqrt{n})^{2k+1}]^2\right]\\
&= \frac{1}{n^{2k+1}} \sum_{\substack{i_1,\ldots,i_{2k+1}\geq 1\\j_1,\ldots,j_{2k+1}\geq 1}} \mathbb{E}[\omega_{i_1}\ldots \omega_{i_{2k+1}}\omega_{j_1}\ldots \omega_{j_{2k+1}}]\\&\hspace{4cm}\times E[\barre{N_n(i_1)}\ldots\barre{N_n(i_{2k+1})}]E[\barre{N_n(j_1)}\ldots\barre{N_n(j_{2k+1})}]\\
&= \Sigma_1(n) + \Sigma_2(n),
\end{align*}
where $\Sigma_1(n)$ is the part corresponding to pairings of the set $\{i_1,\ldots,i_{2k+1},j_1,\ldots,j_{2k+1}\}$ and $\Sigma_2(n)$ is the other part. With the same argument as above we get 
\begin{equation*}
\Sigma_2(n) = \mathcal{O}(1/n)
\end{equation*}
and
\begin{equation*}
\Sigma_1(n) = \frac{1}{n^{2k+1}} \sum_{\substack{1\leq p\leq 2k+1\\ p \,{\rm odd}}} A(p)
\end{equation*}
where $A(p)$ is equal to
\begin{align*}
&{\rm c.f.} \sum_{\substack{k_1,\ldots,k_p\geq1\\ i_1,\ldots,i_{k+(1-p)/2}\geq1\\ j_1,\ldots,j_{k+(1-p)/2}\geq1\\ \rm{all distinct}}}  E^{\otimes 2}\left[\barre{N^{(1)}_n(i_1)}^2 \ldots \barre{N^{(1)}_n(i_{k+(1-p)/2})}^2 \barre{N^{(2)}_n(j_1)}^2 \ldots \barre{N^{(2)}_n(j_{k+(1-p)/2})}^2\right. \nonumber\\&\hspace{6cm} \left. \times \barre{N^{(1)}_n(k_1)}\barre{N^{(2)}_n(k_1)}\ldots \barre{N^{(1)}_n(k_p)}\barre{N^{(2)}_n(k_p)}\right]\\
&= {\rm c.f.} E^{\otimes 2}\left[\left(\sum_{i\geq 1} \barre{N^{(1)}_n(i)}^2 \right)^{k+\frac{1-p}{2}} \left(\sum_{i\geq 1} \barre{N^{(2)}_n(i)}^2 \right)^{k+\frac{1-p}{2}} \left( \sum_{i\geq 1} \barre{N^{(1)}_n(i)} \barre{N^{(2)}_n(i)} \right)^p\right] \\&\hspace{9cm}+ \mathcal{O}(n^{2k})\\
&= {\rm c.f.} E^{\otimes 2} \left[ (J_n^{(1)})^{k+(1-p)/2}(J_n^{(2)})^{k+(1-p)/2}\left( \sum_{i\geq 1} \barre{N^{(1)}_n(i)} \barre{N^{(2)}_n(i)} \right)^p \right] + \mathcal{O}(n^{2k}).
\end{align*}
Then
\begin{align*}
|A(p)| &\leq c_k n^{2k+1-p} E^{\otimes 2}\left|\sum_{i\geq 1} \barre{N^{(1)}_n(i)} \barre{N^{(2)}_n(i)} \right|^p\\
&\leq c_k n^{2k+1} E^{\otimes 2}\left|\frac{1}{n}\sum_{i\geq 1} \barre{N^{(1)}_n(i)} \barre{N^{(2)}_n(i)} \right|\\
&\leq c_k n^{2k+1} \sqrt{E^{\otimes 2} \left( \frac{1}{n}\sum_{i\geq 1} \barre{N^{(1)}_n(i)} \barre{N^{(2)}_n(i)}  \right)^2}\\
&\leq c_k n^{2k+1/2}.
\end{align*}
The last line is obtained by the same computations as in Equations (\ref{aux3}), (\ref{aux4}) and Item (5) of Proposition \ref{estimates_renewal}. Finally, $\Sigma_2(n) = \mathcal{O}(1/\sqrt{n})$, which ends the proof.
\end{proof}

 \subsubsection{Conclusion}
 
\begin{proof}[Proof of Theorem \ref{quenched_clt}]
We now prove that $\omega$-almost surely, the moments of $\barre{Z_n}/\sqrt{n}$ with respect to $P$ converge to the moments of $Z$, Gaussian random variable with zero mean and variance $1-1/m$. Let $k\geq1$ and $t_m := [m^{\nu}]$ with $\nu > 2$. First we have
\begin{equation*}
|m_{t_m}(k) - E(Z^k)| \leq |m_{t_m}(k) - \mathbb{E}m_{t_m}(k)| + |\mathbb{E}m_{t_m}(k) - E(Z^k)|.
\end{equation*}
The first term tends to zero $\omega$ almost surely because of Lemma \ref{lemma4} and Borel-Cantelli Lemma, and the second term tends to zero because of Lemma \ref{lemma2}. We are left with the control of $|m_n(k) - m_{t_m}(k)|$ for $t_m < n \leq t_{m+1}$. Let 
\begin{equation*}
D_m = \sup_{t_m < n \leq t_{m+1}} |E[\barre{Z_n}^k] - E[\barre{Z_{t_m}}^k]|.
\end{equation*}
We have for all $t_m < n \leq t_{m+1}$
\begin{align*}
|m_n(k) - m_{t_m}(k)| &\leq t_m^{-k/2}|E[\barre{Z_n}^k] - E[\barre{Z_{t_m}}^k]| +  |E[\barre{Z_{t_m}}^k]|\left| \frac{1}{n^{k/2}} - \frac{1}{t_m^{k/2}}\right|\\
&\leq D_m t_m^{-k/2} + |E[\barre{Z_{t_m}}^k]|\left| \frac{1}{t_{m+1}^{k/2}} - \frac{1}{t_m^{k/2}}\right|\\
&\leq D_m t_m^{-k/2} + |E[(\barre{Z_{t_m}}/\sqrt{t_m})^k]| \left|\left(\frac{t_m}{t_{m+1}}\right)^{k/2} -1 \right|.
\end{align*}
In the second term of the last line, the first factor converges $\omega$-almost surely whereas the second factor obviously tends to $0$ from the definition of $t_m$. Therefore, we now only need to prove that the first term tends to zero $\omega$-almost surely. For any even integer $l\geq2$,
\begin{align}
t_m^{-kl/2}\mathbb{E}D_m^l &\leq t_m^{-kl/2} \sum_{n= t_m+1}^{t_{m+1}} \mathbb{E}\left[ |E[\barre{Z_n}^k - \barre{Z_{t_m}}^k]|^l\right]\nonumber \\
&\leq t_m^{-kl/2} \sum_{n= t_m+1}^{t_{m+1}} \mathbb{E}\left[ \left[E(\barre{Z_n}-\barre{Z_{t_m}})(\sum_{i=0}^{k-1}\barre{Z_n}^i \barre{Z_{t_m}}^{k-1-i}  )\right]^l \right]\nonumber \\
&\leq c t_m^{-kl/2} \sum_{n= t_m+1}^{t_{m+1}} \mathbb{E}E (\barre{Z_n}-\barre{Z_{t_m}})^l \sum_{i=0}^{k-1} \barre{Z_n}^{il} \barre{Z_{t_m}}^{(k-1-i)l}\nonumber\\
&\leq c t_m^{-kl/2}  \sum_{n= t_m+1}^{t_{m+1}}  \left(E\mathbb{E}(\barre{Z_n}-\barre{Z_{t_m}})^{2l} \right)^{1/2} \left(\sum_{i=0}^{k-1} E\mathbb{E}\barre{Z_n}^{2il} \barre{Z_{t_m}}^{2(k-1-i)l}  \right)^{1/2}\label{step1} \\
&\leq c t_m^{-kl/2} \sum_{n= t_m+1}^{t_{m+1}} (n-t_m)^{l/2} \left(\sum_{i=0}^{k-1} n^{il} t_m^{l(k-1-i)}\right)^{1/2}\label{step2}\\
&\leq c t_m^{-kl/2} \left( \sum_{n= t_m+1}^{t_{m+1}} (n-t_m)^{l/2} \right) t_m^{(k-1)(l/2)}\nonumber \\
&\leq c t_m^{-l/2} (t_{m+1}-t_m)^{1+l/2}\nonumber\\
&\leq C m^{\nu-1-l/2}.\nonumber
\end{align}
To go from (\ref{step1}) to (\ref{step2}), we use the fact that $\mathbb{E}E[\overline{Z_n}^k] \leq C n^{k/2}$ (Lemma \ref{lemma2}) plus the fact that $$\mathbb{E}E[(\barre{Z_{n}}- \barre{Z_{t_m}})^{2k}]\leq C (n- t_m)^k,$$ which comes as a slight modification of what we did in the proof of Lemma \ref{lemma2}. It is then enough to choose $l>2\nu$ to apply Borel-Cantelli Lemma and obtain $\omega$ almost sure convergence to zero of $D_m t_m^{-k/2}$.
\end{proof}

\subsection{Proof of Theorem \ref{quenched_clt2}}

We assume $S$ is as in Theorem \ref{quenched_clt2}. For all $k\geq1$, we denote by $m_n(k)$ the moments $$m_n(k) :=  E[(Z_n/\sqrt{n \log n})^k].$$
The proof is very similar to the proof of Theorem \ref{quenched_clt}. The main difference is that the process $(Z_n)_{n\geq0}$ doesn't need to be recentered. As a consequence, the $J_n$'s defined in Equation (\ref{defJn}) will be replaced by the $I_n$'s, and the $\overline{N_n(i)}$'s by the $N_n(i)$'s.

 \subsubsection{Control of the moments}
 
 \begin{lem}[Convergence of quenched averaged moments]\label{lemma2RW} For all $k\geq1$,
  \begin{align}
   &\mathbb{E}[m_n(2k)] \stackrel{n\rightarrow +\infty}{\longrightarrow} \sigma^{2k} (2k-1)!!\label{evenRW}\\
   &\mathbb{E}[m_n(2k-1)] =0 \label{oddRW}
  \end{align}
 \end{lem}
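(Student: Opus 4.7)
The strategy mirrors the proof of Lemma \ref{lemma2}, with three main adaptations: (i) the normalization is $\sqrt{n\log n}$ rather than $\sqrt{n}$; (ii) since $E\omega_0 = 0$ by symmetry, the random variable $Z_n$ is already centered and the local times $N_n(i)$ appear in place of $\overline{N_n(i)}$; (iii) the roles of $J_n$ and $J_n^{[p]}$ are played by $I_n$ and $I_n^{[p]}$, and the relevant a.s. limit / moment asymptotics are given by Items (2) and (3) of Proposition \ref{estimate_RW4} rather than Items (2) and (4) of Proposition \ref{estimates_renewal}.

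The odd case (\ref{oddRW}) is immediate: expanding $Z_n^{2k-1}$ and taking $\mathbb{E}$, every monomial $\mathbb{E}[\omega_{i_1}\cdots\omega_{i_{2k-1}}]$ involves an odd total power, so at least one $\omega_{i_j}$ appears an odd number of times and vanishes by assumption (\ref{asspt1}).

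For the even case, I would write
\begin{equation*}
\mathbb{E}[m_n(2k)] = \frac{1}{(n\log n)^k} \sum_{i_1,\ldots,i_{2k}\geq 1} \mathbb{E}[\omega_{i_1}\cdots\omega_{i_{2k}}]\,E[N_n(i_1)\cdots N_n(i_{2k})]
\end{equation*}
and group terms by the partition of $\{1,\ldots,2k\}$ induced by the equalities among the $i_r$'s. By (\ref{asspt1}), only partitions all of whose blocks have even size contribute. Writing $j$ for the number of distinct values and $l_1,\ldots,l_j\in 2\mathbb{N}^*$ for the block sizes ($l_1+\cdots+l_j=2k$, so $j\leq k$), each such contribution is, up to a combinatorial factor, bounded by
\begin{equation*}
\sum_{\substack{i_1,\ldots,i_j\geq 1\\ \text{all distinct}}} \Big(\prod_{m=1}^j \mathbb{E}[\omega_0^{l_m}]\Big) E\big[N_n(i_1)^{l_1}\cdots N_n(i_j)^{l_j}\big] \;\leq\; c_k\, E\big[I_n^{[l_1]}\cdots I_n^{[l_j]}\big].
\end{equation*}
Applying the generalized Hölder inequality and Item (1) of Proposition \ref{estimate_RW4} gives $E[I_n^{[l_1]}\cdots I_n^{[l_j]}]\leq c_k\, n^j (\log n)^{\sum_m(l_m-1)} = c_k\,n^j(\log n)^{2k-j}$. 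Hence each partition with $j<k$ contributes at most $c_k\,n^{j-k}(\log n)^{k-j}=c_k((\log n)/n)^{k-j}\to 0$ after normalization by $(n\log n)^k$.

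The only remaining contribution is the partition into $k$ pairs, of which there are $(2k-1)!!$. Using $\mathbb{E}[\omega_0^2]=1$ by (\ref{asspt4}), this gives
\begin{equation*}
\frac{(2k-1)!!}{(n\log n)^k} \sum_{\substack{i_1,\ldots,i_k\geq 1\\ \text{all distinct}}} E\big[N_n(i_1)^2\cdots N_n(i_k)^2\big] = \frac{(2k-1)!!}{(n\log n)^k}\Big(E[I_n^k] - R_n\Big),
\end{equation*}
where the remainder $R_n$ sums over indices with at least one coincidence and is controlled by the same Hölder argument as above, yielding $R_n=\mathcal{O}(n^{k-1}(\log n)^{k+1})=o((n\log n)^k)$. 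The main term converges to $\sigma^{2k}(2k-1)!!$ by Item (3) of Proposition \ref{estimate_RW4}, which establishes (\ref{evenRW}).

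The main obstacle is purely bookkeeping: carefully extracting, for each partition, the bound $n^j(\log n)^{2k-j}$ uniformly in the choice of block sizes, and verifying that the "diagonal correction" $R_n$ in the leading pair-partition term is genuinely of lower order. Both reduce to the key gain that $n^j(\log n)^{2k-j}/(n\log n)^k \to 0$ whenever $j<k$, which is the essential reason the two-dimensional logarithmic correction is still compatible with a Gaussian limit.
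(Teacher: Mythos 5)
Your proposal is correct and follows essentially the same route as the paper: decompose by the partition induced by coincidences among the indices, kill odd moments by the symmetry assumption (\ref{asspt1}), bound the contributions with $j<k$ distinct indices by $c_k\,n^j(\log n)^{2k-j}$ via Item (1) of Proposition \ref{estimate_RW4}, and identify the pair-partition term with $E[I_n^k]/(n\log n)^k \to \sigma^{2k}$ via Item (3). The only difference is that you spell out the H\"older step and the diagonal correction $R_n$ explicitly, which the paper leaves implicit by reference to the proof of Lemma \ref{lemma2}.
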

 
 \begin{proof}
 Equation (\ref{oddRW}) is a direct consequence of Assumption (\ref{asspt1}). Let $k\geq1$ and let us prove Equation (\ref{evenRW}). Adapting the proof of Lemma \ref{lemma2}, we have:
 \begin{align*}
 \mathbb{E}[m_n(2k)] &= \mathbb{E}E[(Z_n/\sqrt{n\log n})^{2k}]\\
 &=\frac{1}{(n\log n)^k} \sum_{i_1,\ldots, i_{2k}\geq 1} \mathbb{E}(\omega_{i_1}\ldots \omega_{i_{2k}}) E\left[ N_n(i_1)\ldots N_n(i_k)\right]\\
 &= \frac{1}{(n\log n)^k} \sum_{j=1}^k C_n(j)
 \end{align*}
 where
 \begin{equation*}
 C_n(j) := \mathrm{c.f.} \sum_{\substack{i_1,\ldots ,i_j \geq 1 \\ p \neq q \Rightarrow i_p \neq i_q\\ l_1 + \ldots + l_j = 2k\\ l_j \in 2\mathbb{N}^*}} \left[ \prod_{m=1}^j \mathbb{E}[\omega_{i_m}^{l_m}] \right] E\left[\prod_{m=1}^j N_n(i_m)^{l_m} \right]
 \end{equation*}
 For all $j\leq k-1$, we can prove as in the previous section, using Item (1) of Proposition \ref{estimate_RW4} that:
 \begin{equation*}
 C_n(j) \leq c_k n^j (\log n)^{2k-j}.
 \end{equation*}
Moreover,
\begin{equation*}
C_n(k) = (2k-1)!! \sum_{\substack{i_1,\ldots, i_k \geq 1\\ \mathrm{all\,distinct}}} \mathbb{E}(\omega_{i_1}^2)\ldots \mathbb{E}(\omega_{i_k}^2)E[N_n(i_1)^2 \ldots N_n(i_k)^2],
\end{equation*}
so from Item (3) of Proposition \ref{estimate_RW4}, we get
\begin{equation}\label{moment2RW}
\frac{C_n(k)}{(n \log n)^k} \rightarrow \sigma^{2k} \times (2k-1)!!,
\end{equation}
which concludes the proof.
\end{proof}

\begin{lem}[Control of the variance of moments]\label{lemma4RW} 
We have the following:
 \begin{align*}
  &\bvar[m_n(2k)] = \mathcal{O}(1/(\log n)^2),\\
 &\bvar[m_n(2k+1)] = \mathcal{O}(1/\log n).
  \end{align*}
\end{lem}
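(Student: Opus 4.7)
The strategy mirrors that of Lemma~\ref{lemma4}: substitute $J_n\to I_n$ and $\overline{N_n(i)}\to N_n(i)$ (no recentering is needed, since $EZ_n=0$ by the symmetry of $\omega$), and replace the normalization $n^k$ by $(n\log n)^k$. The essential feature is a $\log n$ gap between self- and mutual-intersection moments: $E[I_n^k]\sim\sigma^{2k}(n\log n)^k$ by Item~(3) of Proposition~\ref{estimate_RW4}, whereas $E^{\otimes 2}[Q_n^k]\leq C n^k$ by Item~(4). Every cross pairing between the two independent replicas of the walk converts a self-intersection factor into a mutual-intersection factor, saving one factor of $\log n$.

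Concretely, for the even moment I would expand $\mathbb{E}[m_n(2k)^2]$ as a sum over index configurations in $\{i_1,\ldots,i_{2k}\}\cup\{j_1,\ldots,j_{2k}\}$, use the symmetry of $\omega$ to restrict to even multiplicities, and split the result into $\Sigma_1+\Sigma_2+\Sigma_3$ (pairings internal to the two groups / mixed pairings / higher coincidences) exactly as in Lemma~\ref{lemma4}. The key estimate is that for $p\in\{2,4,\ldots,2k\}$ cross pairs, the corresponding contribution is
\begin{equation*}
A(p)\leq c_k\, E^{\otimes 2}\!\left[(I_n^{(1)})^{k-p/2}(I_n^{(2)})^{k-p/2} Q_n^p\right]+(\text{lower order}),
\end{equation*}
where the factor $Q_n^p$ arises from the $p$ indices shared across the two groups. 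Cauchy--Schwarz together with the independence of $S^{(1)},S^{(2)}$ and Items~(3) and (4) of Proposition~\ref{estimate_RW4} gives
\begin{equation*}
A(p)\leq c_k\, E[I_n^{2k-p}]\sqrt{E^{\otimes 2}[Q_n^{2p}]}\leq c_k\,(n\log n)^{2k-p} n^p,
\end{equation*}
so $A(p)/(n\log n)^{2k}\leq c_k/(\log n)^p$, saturated at $p=2$. For $\Sigma_1$, after subtraction of its own coincidence corrections (handled via Item~(1) of Proposition~\ref{estimate_RW4}), one recovers $[\mathbb{E}m_n(2k)]^2$ up to $\mathcal{O}(1/(\log n)^2)$; $\Sigma_3$ is smaller still. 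Combining the three pieces yields $\bvar[m_n(2k)]=\mathcal{O}(1/(\log n)^2)$.

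The odd case is analogous: $\mathbb{E}[m_n(2k+1)]=0$ by symmetry, so $\bvar[m_n(2k+1)]=\mathbb{E}[m_n(2k+1)^2]$. Parity now forces every surviving pairing to include at least one cross pair, so $p$ ranges over the odd integers $\{1,3,\ldots,2k+1\}$ with normalization $(n\log n)^{2k+1}$; the same Cauchy--Schwarz bound yields $A(p)/(n\log n)^{2k+1}\leq c_k/(\log n)^p$, saturated at $p=1$, hence $\bvar[m_n(2k+1)]=\mathcal{O}(1/\log n)$. The main technical point throughout is the bookkeeping for coincidence terms in $\Sigma_1$ and $\Sigma_3$: one must check that these do not exceed the target rates, and this rests on the fact that $E[(I_n^{[p]})^k]\leq c\, n^k(\log n)^{k(p-1)}$ from Item~(1) of Proposition~\ref{estimate_RW4} trades one factor of $n$ for each additional coincidence. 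The clean Cauchy--Schwarz bound on $\Sigma_2$ itself is essentially dimensional --- each cross pair converts a $\log n$ into a $1$, which is precisely the slack needed.
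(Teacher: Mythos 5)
Your proposal is correct and follows essentially the same route as the paper: the same decomposition into $\Sigma_1(n)+\Sigma_2(n)+\Sigma_3(n)$, the same reduction of the mixed-pairing term to $E^{\otimes 2}\bigl[(I_n^{(1)})^{k-p/2}(I_n^{(2)})^{k-p/2}Q_n^p\bigr]$, and the same inputs from Items (1), (3) and (4) of Proposition \ref{estimate_RW4}. The only (immaterial) difference is that you bound this expectation by Cauchy--Schwarz combined with the independence of the two replicas, where the paper uses a three-term H\"older inequality; both yield the identical estimate $A(p)\leq c_k\,n^{2k}(\log n)^{2k-p}$ and hence the stated rates, saturated at $p=2$ and $p=1$ respectively.
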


\begin{proof}
We only adapt the proof given in the previous section. Let us start with the variance of even moments. Let $k\geq1$. From the proof of Lemma \ref{lemma2RW} we can get on the one hand
\begin{equation*}
[\mathbb{E}m_n(2k)]^2 =\left( (2k-1)!!  E[(I_n/(n \log n))^k]\right)^2 + \mathcal{O}(\log n / n),
\end{equation*}
and on the other hand,
\begin{align*}
&\mathbb{E}[m_n(2k)^2] = \Sigma_1(n) + \Sigma_2(n) + \Sigma_3(n),
\end{align*}
where the $\Sigma_i(n)$ ($i\in\{1,2,3\}$) are the same as those appearing in Equation (\ref{sigma}), except that the recentered local times $\overline{N_n(i)}$'s must be replaced by the local times $N_n(i)$'s, and the $J_n$'s by the $I_n$'s. With these modifications, the proof of Lemma \ref{lemma4} can be reproduced, and thanks to Item (1) of Proposition \ref{estimate_RW4}, we get
\begin{equation*}
\Sigma_1(n) = \frac{(2k-1)!!^2}{(n \log n)^{2k}} (EI_n^k)^2 + \mathcal{O}(\log n /n).
\end{equation*}
From the same item we also have
\begin{equation*}
\Sigma_3(n) = \mathcal{O}(\log n / n).
\end{equation*}
Finally,
\begin{equation*}
\Sigma_2(n) = \frac{1}{(n \log n)^{2k}} \sum_{\substack{2\leq p \leq 2k\\ p \,{\rm even }}} A(p)
\end{equation*}
where 
\begin{equation*}
A(p) \leq c_k E^{\otimes 2}\left([I_n^{(1)}]^{k-p/2}[I_n^{(2)}]^{k-p/2} Q_n^p\right) + \mathcal{O}(n^{2k-1} (\log n)^{2k+1}).
\end{equation*}
Using multidimensional H\"older inequality, we obtain
\begin{align*}
 &E^{\otimes 2}\left([I_n^{(1)}]^{k-p/2}[I_n^{(2)}]^{k-p/2} Q_n^p\right)\\
 &\leq \left[E^{\otimes2} (I_n^{(1)})^{3k-3p/2} \right]^{1/3} \left[E^{\otimes2} (I_n^{(2)})^{3k-3p/2} \right]^{1/3} [E^{\otimes2} Q_n^{3p}]^{1/3}\\
 &\leq \left[E I_n^{3k-3p/2} \right]^{2/3} [E^{\otimes2} Q_n^{3p}]^{1/3} \\
 & \leq C n^{2k}  (\log n)^{2k-p}\quad \mbox{\rm from Items (1) and (4) of Proposition \ref{estimate_RW4}.}
\end{align*}
This ends the proof for even moments.

Let us now consider the variance of odd moments. Let $k\geq1$. Again, by adapting the proof of Lemma \ref{lemma4}, we get
\begin{equation*}
\bvar [m_n(2k+1)] = \frac{1}{(n \log n)^{2k+1}} \sum_{\substack{1\leq p\leq 2k+1\\ p \rm{\, odd}}} A(p) + \mathcal{O}(\log n /n).
\end{equation*}
where $A(p)$ is equal to
\begin{align*}
&{\rm c.f.} \sum_{\substack{k_1,\ldots,k_p\geq1\\ i_1,\ldots,i_{k+(1-p)/2}\geq1\\ j_1,\ldots,j_{k+(1-p)/2}\geq1\\ \rm{all distinct}}}  E^{\otimes 2}\left[N^{(1)}_n(i_1)^2 \ldots N^{(1)}_n(i_{k+(1-p)/2})^2 N^{(2)}_n(j_1)^2 \ldots N^{(2)}_n(j_{k+(1-p)/2})^2 \right. \\ &\hspace{6cm} \left.\times N^{(1)}_n(k_1)N^{(2)}_n(k_1)\ldots N^{(1)}_n(k_p)N^{(2)}_n(k_p)\right]\\
&= {\rm c.f.} E^{\otimes 2}\left[\left(\sum_{i\geq 1} N^{(1)}_n(i)^2 \right)^{k+(1-p)/2} \left(\sum_{i\geq 1} N^{(2)}_n(i)^2 \right)^{k+(1-p)/2} \left( \sum_{i\geq 1} N^{(1)}_n(i) N^{(2)}_n(i) \right)^p\right]\\& \hspace{9cm} + \mathcal{O}(n^{2k} (\log n)^{2k+2})\\
& = {\rm c.f.} E^{\otimes 2}[(I_n^{(1)})^{k+(1-p)/2}(I_n^{(2)})^{k+(1-p)/2}Q_n^p] + \mathcal{O}(n^{2k} (\log n)^{2k+2})
\end{align*}
We then use H\"older inequality to obtain
\begin{align*}
 &E^{\otimes 2}\left([I_n^{(1)}]^{k+(1-p)/2}[I_n^{(2)}]^{k+(1-p)/2} Q_n^p\right)\\
 &\leq \left[E^{\otimes2} (I_n^{(1)})^{3k+3(1-p)/2} \right]^{1/3} \left[E^{\otimes2} (I_n^{(2)})^{3k+3(1-p)/2} \right]^{1/3} [E^{\otimes2} Q_n^{3p}]^{1/3}\\
 &\leq \left[E I_n^{3k+3(1-p)/2} \right]^{2/3} [E^{\otimes2} Q_n^{3p}]^{1/3} \\
 & \leq C n^{2k+1} (\log n)^{2k+1-p} \quad \mbox{\rm from Items (1) and (4) of Proposition \ref{estimate_RW4} },
\end{align*}
which ends the proof.
\end{proof}
Let $t_m=[\exp(m^{1+\nu})]$ for some $\nu>0$.
From Borel-Cantelli lemma,  $\omega$-almost surely, the moments of $Z_{t_m}/\sqrt{t_m \log t_m}$ with respect to $P$ converge to the moments of $Z$, Gaussian random variable with zero mean and variance $\sigma^2$. Theorem \ref{quenched_clt2} follows from the classical moment theorem.

\subsection{Proof of Theorem \ref{quenched_clt3}}
The random process $\xi:=(\xi_k)_{k\geq 0}$ defined as $\xi_k:= (\omega_{S_k+x} )_{x\in\mathbb{Z}^d} , k\geq 0$  is a Markov chain on the state space $X:=(\mathbb{R})^{\mathbb{Z}^d}$ with transition 
operator defined for any bounded measurable function $f$ as 
$$ P f (\omega) = E [ f (   (\omega_{X_1+x} )_{x\in\mathbb{Z}^d}) ].$$
The Markov chain $\xi$ is stationary and ergodic (from Kakutani random ergodic theorem \cite{KAKU}). The stationary law $\mu$ is given by the product law of the random
variables $w_x, x\in\mathbb{Z}^d$.
A direct application of the main theorem in \cite{DL} gives us the result. Indeed, choose $f$ as the projection on the zero component i.e.
$f: X\rightarrow \mathbb{R}; \omega\rightarrow \omega_0$. The random scenery being assumed centered and square integrable, it implies that $\int f(x) d\mu(x)=0$ and $f\in L^2(\mu)$.
Then,
\begin{eqnarray*}
\left|\left| \sum_{k=1}^n P^k f \right|\right|_{2,\mu}^2 & =& \left|\left| \sum_{k=1}^n E [ f (   (\omega_{S_k+x} )_{x\in\mathbb{Z}^d}) ] \right|\right|_{2,\mu}^2\\
&=& \mathbb{E} \left[  \left( \sum_{k=1}^n E[ w_{S_k} ]\right)^2 \right] \\
&=&\mathbb{E} \left[ \left( \sum_{x\in\mathbb{Z}^d} \omega_x N_n(x) \right)^2 \right]\\
&=& \sum_{x,y\in\mathbb{Z}^d}  \mathbb{E} (w_x w_y) E^{\otimes 2} [ N_n^{(1)} (x) N_n^{(2)}(y)]  \\
&=& \sum_{x\in\mathbb{Z}^d} E^{\otimes 2}[N_n^{(1)}(x)N_n^{(2)}(x)]\\
&=& E^{\otimes 2} [ Q_n] = \mathcal{O} (n^{\alpha}) 
\end{eqnarray*} 
with $0\leq \alpha <1$ from Propositions \ref{estimate_RW2} and \ref{estimate_RW3}.

\section{Conclusion and further comments}
As stated in the introduction, this paper is a first attempt in proving distributional limit theorems for random walk in {\it quenched} random scenery.
The method of moments we used to prove Theorems \ref{quenched_clt} and \ref{quenched_clt2} requires the existence of the moments of any order of  the scenery. To relax this assumption a new approach should be developed.\\*
The case of the dimension one was not discussed in the paper and is far from being trivial. 
We are only able to prove that if there is such a quenched limit theorem then the limit law necessarily 
depends on the scenery. \\*
Another non trivial question is to extend Theorem \ref{quenched_clt2} in order to get a limit theorem along the full sequence of the integers.

\bibliographystyle{spmpsci}      
\bibliography{references}
\end{document}